\newtheorem{theorem}{Theorem}[section]
\newtheorem{remark}{Remark}[section]
\newtheorem{lemma}[theorem]{Lemma}
\newtheorem{pro}{Proposition}[section]
\newcommand{\bt}{\begin{theorem}}
\newcommand{\bl}{\begin{lemma}}
\newcommand{\el}{\end{lemma}}
\newcommand{\et}{\end{theorem}}
\newcommand{\bn}{\begin{eqnarray}}
\newcommand{\en}{\end{eqnarray}}
\newcommand{\bnn}{\begin{eqnarray*}}
\newcommand{\enn}{\end{eqnarray*}}
\newcommand{\ba}{\begin{aligned}}
\newcommand{\ea}{\end{aligned}}
\newcommand{\be}{\begin{equation}}
\newcommand{\ee}{\end{equation}}
\newcommand{\mcD}{\mathcal{D}}
\begin{document}
\title[Global Axisymmetric Solutions for Navier-Stokes System]{Global strong solutions to the
inhomogeneous incompressible Navier-Stokes system in the exterior of  a cylinder}
\author{Zhengguang Guo}

\address{School of Mathematics and Statistics, Huaiyin Normal University, Huai'an 223300, China; School of mathematical Sciences, Institute of Natural Sciences, Shanghai Jiao Tong University, 800 Dongchuan Road, Shanghai, China}
\email{gzgmath@163.com}

\author{Yun Wang}
\address{School of Mathematical Sciences, Center for dynamical systems and differential equations, Soochow University, Suzhou, China}
\email{ywang3@suda.edu.cn}

\author{Chunjing Xie}
\address{School of mathematical Sciences, Institute of Natural Sciences,
Ministry of Education Key Laboratory of Scientific and Engineering Computing,
and SHL-MAC, Shanghai Jiao Tong University, 800 Dongchuan Road, Shanghai, China}
\email{cjxie@sjtu.edu.cn}

\begin{abstract}
In this paper, the global strong axisymmetric solutions for  the inhomogeneous incompressible Navier-Stokes system are established in the exterior of a cylinder subject to the Dirichlet boundary conditions. Moreover, the  vacuum is allowed in these solutions. One of the key ingredients of the analysis is to obtain the ${L^{2}(s,T;L^{\infty}(\Omega))}$ bound for the velocity field, where the axisymmetry of the solutions plays an important role.  \end{abstract}

\keywords{inhomogeneous Navier-Stokes system; axisymmetric solutions; global strong solutions; exterior of a cylinder}
\subjclass[2010]{35Q30, 35B07, 76D05  }
\maketitle

\section{Introduction and main results\label{intro}}

The mixture of incompressible and non-reactant
flows, flows with complex structure
fluids containing a melted substance, etc (\cite{Lions}), can be described by the following inhomogeneous incompressible
Navier-Stokes system
\begin{equation}
\left\{
\begin{array}{l}
(\rho \mathbf{u})_{t}+\mathrm{div}(\rho \mathbf{u}\otimes \mathbf{u})-\mu
\Delta \mathbf{u}+\nabla P=0,\ \ \ \ \mbox{in}\ \Omega \times \lbrack 0,T), \\
\rho _{t}+\mathrm{div}(\rho \mathbf{u})=0\text{ \ \ \ }\mbox{in}\ \Omega
\times \lbrack 0,T) \\
\mathrm{div}~\mathbf{u}=0,\ \ \ \ \mbox{in}\ \Omega \times \lbrack 0,T),%
\end{array}
\right.  \label{NS}
\end{equation}
where $\rho ,$ $\mathbf{u,}$ $P$, and $\mu $ are the density, velocity field,
pressure, and viscosity coefficient of fluid, respectively. In this paper, the viscosity coefficient is assumed to be a constant.
Without loss of generality, one assumes $\mu =1$. Furthermore, in the domain $\Omega$ where the fluid occupies,   the system \eqref{NS} is usually supplemented
with the following initial conditions and no slip boundary conditions
\begin{equation}
(\rho ,\rho \mathbf{u})|_{t=0}=(\rho _{0},\rho _{0}\mathbf{u}_{0})\text{ in }%
\Omega ;\quad \mathbf{u}=0\text{ on }\partial \Omega \times (0,T).
\label{IBVP}
\end{equation}%

 Since Leray's pioneering work \cite%
{Leray} on the global existence of weak solutions to the homogeneous
incompressible Navier-Stokes system (corresponding to the case $\rho\equiv 1$), there have been many important progresses on the homogeneous
incompressible Navier-Stokes system. When the initial density is away from vacuum, there is a  counterpart theory of inhomogeneous Navier-Stokes system to Leray's results. The global existence of weak solutions and local existence of strong solutions for inhomogeneous Navier-Stokes system  were established in \cite{Antontsev,AKM, Solo}.  Furthermore, the strong solution exists globally in two dimensional case \cite{AKM}. Recently, there are many studies on the well-posedness for the inhomogeneous Navier-Stokes system in various critical spaces, see\cite{abidi2,danchin1,danchin2,paicu1} and references therein.

When the vacuum is allowed, the local and global existence of
weak solutions to system (\ref{NS}) was established in \cite{Kim87, Simon}. However, the uniqueness and smoothness of weak solutions to the inhomogeneous Navier-Stokes system,
even for the two dimensional case, are still  open problems. This is very different from the two dimensional homogeneous Navier-Stokes system (\cite{Lady}). A local strong solution under some compatibility conditions on the initial data was established in \cite{Choe}.
More precisely, given $(\rho _{0},\mathbf{u}_{0})$ satisfying%
\begin{equation}
0\leq \rho _{0}\in L^{\frac{3}{2}}(\Omega )\cap H^2(\Omega ),\text{\
}\mathbf{u}_{0}\in H_{0}^{1}(\Omega )\cap H^{2}(\Omega ),
\label{initial rho}
\end{equation}%
and the compatibility conditions
\begin{equation}
-\mu \Delta \mathbf{u}_{0}+\nabla P_{0}=\rho _{0}^{\frac{1}{2}}\mathbf{g},%
\text{ and\ }\mathrm{div}\text{ }\mathbf{u}_{0}=0 \text{ \ in }\Omega ,
\label{compatibility}
\end{equation}%
with some $(P_{0},\mathbf{g})$ belonging to $D^{1,2}(\Omega )\times
L^{2}(\Omega ),$ there exists a unique local strong  solution $(\rho ,
\mathbf{u})$ to the initial boundary value problem \eqref{NS}--\eqref{IBVP}. For the further studies on local well-posedness of strong solutions for inhomogeneous Navier-Stokes system,  see \cite{mmas,jmfm,huangjde,zhangjde, LiJDE} and references therein.  A natural question is
whether the general local strong solutions away from the vacuum can be prolonged globally in time. Suppose
the local strong solution blows up in finite time $T^{\ast },$ a Serrin type
blow-up criterion was established in \cite{Kim},
\begin{equation}
\int_{0}^{T^{\ast }}||\mathbf{u}(t)||_{L_{w}^{r}}^{s}dt=\infty ,\text{ for
any }(r,s)\text{ with }\frac{2}{s}+\frac{n}{r}=1,\text{ }n<r\leq \infty ,
\label{Serrin}
\end{equation}%
where $n$ is the dimension of space, and $L_{w}^{r}$ is the weak $L^{r}$
space. With the aid of this blow up criterion, for the initial data even with the vacuum, the global strong solutions for the inhomogeneous Navier-Stokes system in two dimensional case were established in  \cite{Gui, HW, HW2}.

Global existence of strong solutions for three dimensional Navier-Stokes system even in the homogeneous case is a long standing challenging problem.  However, it was proved in
\cite{La,UI} that for the axisymmetric solutions without swirls,  the global
Leray-Hopf weak solution for homogeneous Navier-Stokes system is regular for all time $t>0$. The proof in \cite{La,UI} was
based on important facts that the vorticity $\mathbf{\omega }=\nabla
\times \mathbf{u}$ satisfies the maximum principle and the global a priori estimate
\begin{equation} \label{maximum}
\left\Vert \frac{\mathbf{\omega }}{r}\right\Vert _{L^{2}(\mathbb{R}%
^{3})}\leq \left\Vert \frac{\mathbf{\omega }_{0}}{r}\right\Vert _{L^{2}(%
\mathbb{R}^{3})}
\end{equation}%
holds. However, when the swirl velocity is present, the global well-posedness for the axisymmetric Navier-Stokes system  becomes much more difficult.  There are many important progresses on this problem, see \cite{HL,CSTY1, CSTY2, KNSS, fangarma,preprint, CL,JX,KPZ,LZ} and references
therein.
On the other hand, the significant partial regularity results in \cite{CKN} (see also
\cite{GKT, Lin, Seregin, TX}) assert that the one-dimensional Hausdorff
measure of the set for singular points is zero. This implies that the
singularity of axisymmetric solutions can only happen at the axis. When the domain is  the exterior of a cylinder,    the global
existence of unique axisymmetric strong solution was proved in \cite{Lady} and \cite{AS} when the no slip and Navier boundary conditions were supplemented, respectively. The crucial points for the analysis in \cite{Lady} and \cite{AS} are an interpolation inequality and  the maximum principle \eqref{maximum}, respectively.

 The axisymmetric solutions for inhomogeneous Navier-Stokes system without swirls were studied in \cite{Abidi} and references therein.  For the inhomogeneous Navier-Stokes system in  an exterior domain,   the local existence of weak solutions was proved in \cite{Padula} when the initial density is positive almost everywhere. The main goal of this paper is to study the global existence of axisymmetric strong solutions for the inhomogeneous Navier-Stokes system in the exterior of a cylinder subject to the no slip boundary conditions,
  where \eqref{maximum} may not be true and it seems difficult to  apply the interpolation inequality used in \cite{Lady}.
 Without loss of generality, in this paper, one assumes that
    \[
    \Omega=\{(x_1, x_2, x_3)\in \mathbb{R}^3: r^2=x_1^2+x_2^2 >1, x_3 \in \mathbb{R}\}.
    \]
    The key idea in this paper is to get some bound for $\|\mathbf{u}\|_{L^2(s, T; L^\infty(\Omega))}$ from the energy inequality, which corresponds to a regularity criterion of Serrin type \cite{Serrin}.

Before stating the main results in the paper, the following notations are introduced.   For $1\leq q\leq \infty $, let $L^{q}(\Omega )$
denote the usual scalar-valued and vector-valued $L^{q}$-space over $\Omega $%
. Let
\begin{equation}
W^{m,q}(\Omega )=\{\mathbf{u}\in L^{q}(\Omega ):D^{\alpha }\mathbf{u}\in
L^{q}(\Omega ),\ |\alpha |\leq m, m\in \mathbb{N}\}.  \notag
\end{equation}%
When $q=2$, one abbreviates
$H^{m}(\Omega )=W^{m,2}(\Omega )$. Denote the closure of $C_0^\infty(\Omega)$  in $H^{1}(\Omega
) $ by $H_{0}^{1}(\Omega )$. Let
\begin{equation}
C_{0,\sigma }^{\infty }(\Omega )=\left\{ \mathbf{u}\in C_{0}^{\infty
}(\Omega ):\mathrm{div}~\mathbf{u}=0,\ \mbox{in}\ \Omega \right\} .  \notag
\end{equation}%
Denote the closure of $C_{0,\sigma }^{\infty }(\Omega )$ in $H^{1}(\Omega )$
by $H_{0,\sigma }^{1}(\Omega )$.

Our main result can be stated as follows.
\begin{theorem}
\label{main-result} Let $(\rho _{0},\mathbf{u}_{0})$ be axisymmetric initial data and satisfy compatibility condition \eqref%
{compatibility} and the following regularity conditions
\begin{equation}\label{regular condition}
\rho_0\geq 0,\ \  \  \rho _{0}- \bar{\rho} \in L^{\frac{3}{2}}(\Omega )\cap
H^{2}(\Omega ),\ \ \ \mathbf{u}_{0}\in H^2(\Omega)\cap H^1_0(\Omega),
\end{equation}
with $\bar{\rho} >0 $ is a constant.
Then for every $T>0$, there exists a unique axisymmetric strong solution $(\rho ,\mathbf{u})$ to the problem \eqref{NS}--\eqref{IBVP} with%
\begin{eqnarray*}
\rho - \bar{\rho}  \in C([0,T]; L^{\frac32}(\Omega) \cap H^{2}(\Omega )), \ \mathbf{u}\in  C([0,T];H^1_{0, \sigma} (\Omega ))\cap L^\infty(0, T; H^2(\Omega)),
\end{eqnarray*}%
and%
\begin{equation*}
\nabla \mathbf{u}_{t}\in L^{2}([0,T]; L^2(\Omega )),\text{ \ }(\rho _{t},\sqrt{%
\rho }\mathbf{u}_{t})\in L^{\infty }([0,T];L^{2}(\Omega )).
\end{equation*}
\end{theorem}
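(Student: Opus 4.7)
The plan is to combine the local strong-solution theorem of Choe (quoted in the excerpt) with a Serrin-type continuation argument. Since the initial data are axisymmetric and satisfy \eqref{compatibility}--\eqref{regular condition}, Choe's theorem yields a unique local strong solution on a maximal interval $[0, T_*)$, and an approximation scheme respecting the axial symmetry (or a uniqueness argument within the symmetric class) forces this solution to be axisymmetric. To prove $T_* = +\infty$ it suffices, by the Serrin-type blow-up criterion \eqref{Serrin} applied with $(r,s) = (\infty, 2)$, to control $\|\mathbf{u}\|_{L^2(0,T; L^\infty(\Omega))}$ uniformly on any finite interval.

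The a priori estimates I would carry out in the following order. \textbf{Step 1: basic energy identity.} Testing the momentum equation with $\mathbf{u}$ and combining with the transport equation for $\rho$ yields $\sqrt{\rho}\,\mathbf{u} \in L^\infty_t L^2$ and $\nabla\mathbf{u} \in L^2_{t,x}$; the transport equation simultaneously propagates the $L^\infty$ bound on $\rho$ and the $L^{3/2}$ bound on $\rho - \bar\rho$. \textbf{Step 2: the crucial $L^2_t L^\infty_x$ estimate.} Because $\Omega = \{r>1\}$ excludes the symmetry axis, each scalar component of an axisymmetric field $f(r,z)$ on $\Omega$ can be regarded as a function on the two-dimensional strip $\tilde\Omega = \{(r,z): r>1, z \in \mathbb{R}\}$ with weight $r \ge 1$. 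In this reduced 2D setting, an Agmon/Ladyzhenskaya-type interpolation gives
\[
\|f\|_{L^\infty(\Omega)}^2 \le C \|f\|_{L^2(\Omega)}\,\|f\|_{H^2(\Omega)},
\]
and integrating in time converts the $L^\infty_t L^2$ energy control, together with integrable control of a second-order Sobolev norm, into the desired $L^2_t L^\infty_x$ bound on $\mathbf{u}$. \textbf{Step 3: higher-order regularity.} Testing the momentum equation with $\mathbf{u}_t$, applying elliptic/Stokes regularity to view the momentum equation as an inhomogeneous Stokes system, and differentiating the transport equation in space yields bounds for $\sqrt{\rho}\,\mathbf{u}_t \in L^\infty_t L^2$, $\nabla\mathbf{u}_t \in L^2_{t,x}$, $\nabla\mathbf{u} \in L^\infty_t L^2$, $\nabla^2\mathbf{u} \in L^2_t L^2$ and $\rho - \bar\rho \in L^\infty_t H^2$, in terms of the quantity controlled in Step 2. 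The $1/r$ and $1/r^2$ factors generated by cylindrical differentiation remain bounded because $r \ge 1$ on $\Omega$. A Gronwall bootstrap couples Steps 2 and 3 and closes the a priori estimates on any $[0,T]$, which yields $T_* = +\infty$ and the stated regularity class; uniqueness follows from a standard energy estimate for the difference of two solutions using the strong-norm bounds just obtained.

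The main obstacle is Step 2: in three dimensions the basic energy inequality is far from sufficient to yield $L^2_t L^\infty_x$ control of the velocity, which is precisely what makes the Serrin endpoint so delicate when the maximum principle \eqref{maximum} is unavailable (as it is here, because swirl is present and the domain is not $\mathbb{R}^3$). Two features make the bound feasible. First, axisymmetry effectively reduces the problem to the two-dimensional meridional plane, where Agmon-type interpolation is strong enough to bridge $L^2$-type energy estimates and the $L^\infty$ norm. Second, the geometry $\Omega = \{r>1\}$ keeps $r$ bounded below by $1$, killing the $1/r$ singularities that would otherwise obstruct this 2D reduction on all of $\mathbb{R}^3$ and would also prevent the interpolation inequality used in \cite{Lady} from applying in the present setting. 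The delicate point will be making this interpolation quantitative enough, in combination with the higher-order estimates of Step 3, to form a self-consistent Gronwall bootstrap on every finite time interval.
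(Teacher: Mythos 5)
Your overall architecture (local existence, axisymmetry by uniqueness, Serrin-type continuation reduced to an $L^2_tL^\infty_x$ bound on $\mathbf{u}$, then $H^1$/$H^2$ estimates and Gronwall) matches the paper, but the heart of the matter --- your Step 2 --- has a genuine gap. The Agmon interpolation $\|f\|_{L^\infty}^2\le C\|f\|_{L^2}\|f\|_{H^2}$ cannot deliver the $L^2(s,T;L^\infty)$ bound from the energy estimate, because the ``integrable control of a second-order Sobolev norm'' you invoke is not an energy-level quantity: in this problem $\|\mathbf{u}\|_{L^2_tH^2}$ (via the Stokes estimate applied to $-\Delta\mathbf{u}+\nabla P=-\rho\mathbf{u}_t-(\rho\mathbf{u}\cdot\nabla)\mathbf{u}$) is controlled only by $\|\sqrt{\rho}\mathbf{u}_t\|_{L^2_tL^2}$ and $\|\mathbf{u}\|_{L^2_tL^\infty}\|\nabla\mathbf{u}\|_{L^\infty_tL^2}$, i.e.\ precisely by the quantity $\Psi$ you are trying to bound. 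If you insert the Agmon bound into the $H^1$-level inequality $\frac{d}{dt}\|\nabla\mathbf{u}\|_{L^2}^2+\|\sqrt{\rho}\mathbf{u}_t\|_{L^2}^2\le C\|\mathbf{u}\|_{L^\infty}^2\|\nabla\mathbf{u}\|_{L^2}^2$, you end up with an inequality of the shape $\Psi(T)\le C\Psi(s)\exp\bigl(C(T-s)^{1/2}\Psi(T)^{1/2}\bigr)$, which does not preclude $\Psi(T)\to\infty$: the exponential of $\Psi^{1/2}$ dominates $\Psi$, and the prefactor cannot be made to vanish as $T\to T^{\ast}$. This is exactly the point where the paper uses a different, sharper tool: the critical logarithmic (Brezis--Gallouet type) Sobolev inequality \eqref{B-Inequality} on the meridional half-plane $D_2=(1,\infty)\times\mathbb{R}$, so that the higher norm $\|\mathbf{u}\|_{L^2(s,T;W^{1,6})}$ enters only under a logarithm and is multiplied by $\|\nabla\mathbf{u}\|_{L^2(s,T;L^2)}^2$. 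The Gronwall exponential then becomes a power, $\Psi(T)\le C\Psi(s)\,\Psi(T)^{C\|\nabla\mathbf{u}\|^2_{L^2(s,T;L^2)}}$ as in \eqref{65}, and since the global dissipation $\int_0^{T^\ast}\|\nabla\mathbf{u}\|_{L^2}^2\,dt$ is finite one can choose $s_0$ close to $T^\ast$ so that the exponent is at most $\tfrac12$ (see \eqref{66}), which closes the estimate. Without the logarithm this absorption mechanism is unavailable, so your bootstrap cannot be made ``self-consistent'' as claimed.

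Two smaller points. First, you invoke Choe--Kim's local theorem directly on the exterior domain; the paper instead constructs the local solution there by solving on an invading sequence of bounded domains $\Omega\cap\{|x|<k\}$ and proving that the lifespan and the strong norms are independent of the domain size, which relies on the uniform Stokes estimate of Lemma \ref{lemma lam2} (constant depending only on the $C^3$-regularity of the boundary, not on its size); some argument of this kind is needed before you can speak of a maximal strong solution on the exterior domain. Second, your observations that axisymmetry reduces matters to the $(r,z)$ half-plane and that $r\ge 1$ removes the $1/r$ singularities are exactly the paper's mechanism, and your Steps 1 and 3 (energy identity, testing with $\mathbf{u}_t$, Stokes regularity, transporting $\nabla\rho$ and $\nabla^2\rho$) coincide with the paper's Steps 1--5; the single essential idea missing from your proposal is the logarithmic interpolation combined with the smallness of the dissipation tail near the putative blow-up time.
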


There are a few remarks in order.
\begin{remark}
Together with the analysis in \cite{HW}, one can also show that this
result holds for inhomogeneous MHD equations.
\end{remark}

\begin{remark}
Together with the method in \cite{LiJDE} for the proof of the local existence of solutions for the inhomogeneous Navier-Stokes system in bounded domains even when the initial data violate the compatibility conditions (\ref{compatibility}),
the compatibility conditions (\ref{compatibility}) should also be removed in Theorem
\ref{main-result}. The major aim of this paper is to highlight the a priori estimate to get global strong axisymmetric solutions for the inhomogeneous Navier-Stokes system so that we try to avoid including a more complicated local existence result in this paper.
\end{remark}

\begin{remark}
The method in this  paper can also be used to prove the global existence of axisymmetric strong solutions to Navier-Stokes system in the exterior of a cylinder subject to Navier boundary condition.
\end{remark}

\begin{remark}
The analysis in this paper should be also helpful for the study on the helically symmetric flows.
\end{remark}

The rest of this paper is organized as follows. Some elementary results on axisymmetric functions, the critical Sobolev inequalities, and the estimates regarding Stokes equations are collected in Section \ref{sec pre}, which are important for the analysis in the whole paper. The proof of Theorem \ref{main-result} is presented in Section \ref{main proof} after one assumes the local well-posedness of the problem. In Section \ref{local result},
the local existence and uniqueness of strong solutions are sketched.

\section{Preliminaries\label{sec pre}}

For $(x_1, x_2, x_3)\in \mathbb{R}^3$, introduce the cylindrical coordinate%
\begin{equation*}
r=\sqrt{(x_1)^{2}+(x_2)^{2}},\quad \theta =\arctan \frac{x_2}{x_1},\quad z=x_3,
\end{equation*}%
and denote ${\boldsymbol{e}}_{r},$ ${\boldsymbol{e}}_{\theta },$ ${%
\boldsymbol{e}}_{z}$ the standard basis vectors in the cylindrical
coordinate:%
\begin{equation}
\mathbf{e}_{r}(\theta )=\left(
\begin{array}{c}
\cos \theta \\
\sin \theta \\
0%
\end{array}%
\right) ,\ \ \mathbf{e}_{\theta }(\theta)=\left(
\begin{array}{c}
-\sin \theta \\
\cos \theta \\
0%
\end{array}%
\right),\ \ {\boldsymbol{e}}_{z}=\left(
\begin{array}{l}
0 \\
0 \\
1%
\end{array}%
\right) .  \notag
\end{equation}

A function $f$ or a vector-valued function $\mathbf{u=(}u^{r},u^{\theta
},u^{z}\mathbf{)}$ is said to be axisymmetric if $f,$ $u^{r},$ $u^{\theta
}$ and $u^{z}$ do not depend on $\theta $:%
\begin{equation}
\mathbf{u}(x_1,x_2,x_3)=u^{r}(r,z)\mathbf{e}_{r}+u^{\theta }(r,z)\mathbf{e}%
_{\theta }+u^{z}(r,z)\mathbf{e}_{z}.  \notag
\end{equation}

The following lemma shows that for axisymmetric initial data the local
strong solution to (\ref{NS}) is also axisymmetric.

\begin{lemma}
\label{axisymmetric} Assume that the initial data $(\rho _{0}, \mathbf{u}_0)$ is axisymmetric.  Then the local strong
solution $(\rho ,\mathbf{u})$ to \eqref{NS}--\eqref{IBVP} is also axisymmetric.
\end{lemma}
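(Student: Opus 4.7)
The plan is to exploit the rotational symmetry of the system together with uniqueness of the local strong solution. Since the domain $\Omega=\{r>1\}$ is invariant under rotations about the $z$-axis and the differential operators appearing in \eqref{NS} are isotropic, a rotation of any solution yields another solution; if the initial data are axisymmetric they are fixed by such rotations, so uniqueness forces the solution itself to be rotationally invariant.

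More precisely, for any $\theta_0\in\mathbb{R}$ I would introduce the rotation matrix
\[
R_{\theta_0}=\begin{pmatrix}\cos\theta_0 & -\sin\theta_0 & 0\\ \sin\theta_0 & \cos\theta_0 & 0\\ 0 & 0 & 1\end{pmatrix},
\]
and define, for the given local strong solution $(\rho,\mathbf{u},P)$ on $\Omega\times[0,T^{*})$,
\[
\tilde{\rho}(x,t):=\rho(R_{\theta_0}^{-1}x,t),\quad \tilde{\mathbf{u}}(x,t):=R_{\theta_0}\,\mathbf{u}(R_{\theta_0}^{-1}x,t),\quad \tilde{P}(x,t):=P(R_{\theta_0}^{-1}x,t).
\]
The first step is to verify, using the chain rule and the identities $R_{\theta_0}^{T}=R_{\theta_0}^{-1}$, that the Laplacian, divergence and gradient commute with $R_{\theta_0}$ in the appropriate sense; for instance $\mathrm{div}_x\tilde{\mathbf{u}}=(\mathrm{div}_y\mathbf{u})(R_{\theta_0}^{-1}x)$ and $\Delta_x\tilde{\mathbf{u}}=R_{\theta_0}(\Delta_y\mathbf{u})(R_{\theta_0}^{-1}x)$. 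Using these identities one checks termwise that $(\tilde{\rho},\tilde{\mathbf{u}},\tilde{P})$ satisfies the continuity equation, divergence-free condition and momentum equation. Since $R_{\theta_0}\Omega=\Omega$, the no-slip boundary condition is preserved, and the rotation is a Sobolev isometry so $(\tilde{\rho},\tilde{\mathbf{u}})$ has the same regularity class as $(\rho,\mathbf{u})$.

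The next step is to observe that because $(\rho_0,\mathbf{u}_0)$ is axisymmetric, one has $\rho_0(R_{\theta_0}^{-1}x)=\rho_0(x)$ and $R_{\theta_0}\mathbf{u}_0(R_{\theta_0}^{-1}x)=\mathbf{u}_0(x)$ (the latter follows by writing $\mathbf{u}_0=u_0^{r}\mathbf{e}_r+u_0^\theta\mathbf{e}_\theta+u_0^z\mathbf{e}_z$ and noting that $R_{\theta_0}\mathbf{e}_r(\theta-\theta_0)=\mathbf{e}_r(\theta)$, and similarly for $\mathbf{e}_\theta$ and $\mathbf{e}_z$). Therefore $(\tilde{\rho},\tilde{\mathbf{u}})$ is a strong solution of \eqref{NS}--\eqref{IBVP} with exactly the same initial and boundary data as $(\rho,\mathbf{u})$. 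Invoking the uniqueness of the local strong solution (which will be established in Section \ref{local result}) yields $\tilde{\rho}=\rho$ and $\tilde{\mathbf{u}}=\mathbf{u}$ for every $\theta_0\in\mathbb{R}$, which is exactly the statement that $(\rho,\mathbf{u})$ is axisymmetric.

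The only place where care is needed is the bookkeeping of the rotation acting simultaneously on the spatial argument and, in the vector case, on the vector itself; this is purely algebraic but must be done consistently so that the momentum equation for $\tilde{\mathbf{u}}$ reduces, after multiplying by $R_{\theta_0}^{-1}$ and changing variables $y=R_{\theta_0}^{-1}x$, to the original momentum equation for $\mathbf{u}$. The main (in fact only) substantive ingredient is the uniqueness statement for local strong solutions, which the paper assumes at this point; all remaining manipulations are elementary chain-rule computations.
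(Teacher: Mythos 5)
Your proposal is correct and follows essentially the same argument as the paper: rotate the solution (acting on both the spatial variable and the vector components), use the rotation invariance of the system, the domain, and the axisymmetric initial data to see that the rotated pair solves the same initial–boundary value problem, and then invoke uniqueness of the local strong solution to conclude rotational invariance for every angle. The extra chain-rule bookkeeping you spell out is exactly what the paper summarizes as "easy to check."
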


\begin{proof}
For every $\eta \in \lbrack 0,2\pi )$,
define the rotation matrix%
\begin{equation}
R(\eta )=(\mathbf{e}_{r}(\eta ),\mathbf{e}_{\theta }(\eta ),\mathbf{e}_{z}).
\notag
\end{equation}%
Let
\begin{eqnarray*}
\varrho (x_1,x_2,x_3,t) =\rho (R(x_1,x_2,x_3),t) \quad \text{and}\quad \mathbf{v}(x_1,x_2,x_3,t) =R^{t}\mathbf{u}(R(x_1,x_2,x_3),t)
\end{eqnarray*}%
where \textquotedblleft $R^{t}$\textquotedblright\ is the transpose of the  matrix $R$. Since the inhomogeneous Navier-Stokes system \eqref{NS} is rotation invariant and $%
\rho _{0}$ and $\mathbf{u}_{0}$ are axisymmetric, it is easy to check that $%
(\varrho ,\mathbf{v})$ is also a solution to \eqref{NS} with the initial
data $(\rho _{0},\mathbf{u}_{0})$. Due to the uniqueness of strong solutions
to \eqref{NS}--\eqref{IBVP}, one has $\varrho =\rho $ and $\mathbf{v}=\mathbf{u}$. Hence
the solution $(\rho ,\mathbf{u})$ is axisymmetric.
\end{proof}

The following critical Sobolev inequality of Logarithmic type plays an important role to obtain  the bound of $\Vert \mathbf{u}\Vert _{L^{2}(s,T;L^{\infty
}(\Omega ))}$.

\begin{lemma}
\label{Brezis-Inequality} Suppose $D$ is a domain in $\mathbb{R}%
^{2} $, for every function $f\in L^{2}(s,t; H_0^1(D)) \cap
L^{2}(s,t;W^{1,q}(D ))$ with some $q>2$, and $s<t$, it holds that
\begin{equation}
\Vert f\Vert _{L^{2}(s,t;L^{\infty }(D))}\leq C\Vert \nabla f\Vert
_{L^{2}(s,t; L^2(D))}\left[ \ln (e + \Vert f\Vert
_{L^{2}(s,t;W^{1,q}(D))} ) \right] ^{\frac{1}{2}} + C,  \label{B-Inequality}
\end{equation}%
where $C$ is independent of the function $s$, $t$, and the domain $D$.
\end{lemma}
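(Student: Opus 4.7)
The plan is to derive the time-integrated log-Sobolev bound from the classical pointwise two-dimensional Brezis-Gallouet-Wainger inequality, combined with Jensen's inequality to transport the logarithm past the time integral. The first step is a reduction to $D=\mathbb{R}^2$: since $q>2$, Morrey's embedding gives $f(\tau,\cdot)\in C(\overline{D})$ with vanishing trace on $\partial D$ (inherited from $H_0^1$), so the zero extension of $f(\tau,\cdot)$ to $\mathbb{R}^2$ preserves both $\|\nabla f(\tau)\|_{L^2}$ and $\|f(\tau)\|_{W^{1,q}}$. All constants that appear below will then be universal, and in particular independent of $D$.

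Second, I would invoke the pointwise planar Brezis-Wainger inequality in its scale-invariant form,
\[
\|g\|_{L^\infty(\mathbb{R}^2)}^{2}\leq C\|\nabla g\|_{L^2}^{2}\log\!\left(e+\frac{\|g\|_{W^{1,q}}^{2}}{\|\nabla g\|_{L^2}^{2}}\right),
\]
for $g\in H^1(\mathbb{R}^2)\cap W^{1,q}(\mathbb{R}^2)$ with $q>2$ and $\|\nabla g\|_{L^2}>0$. This is obtained via a Littlewood--Paley split $g=P_{<N}g+P_{\geq N}g$, estimating $\|P_{<N}g\|_{L^\infty}\leq C\|\nabla g\|_{L^2}\sqrt{\log(e+N)}$ by Cauchy--Schwarz on the Fourier side and $\|P_{\geq N}g\|_{L^\infty}\leq CN^{-1+2/q}\|g\|_{W^{1,q}}$ by Bernstein, then optimizing in $N$. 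Applying this pointwise in $\tau$, integrating over $(s,t)$, and setting $M:=\|\nabla f\|_{L^2(s,t;L^2)}^{2}$, Jensen's inequality under the probability measure $d\nu=M^{-1}\|\nabla f(\tau)\|_{L^2}^{2}\,d\tau$ (using concavity of $\log(e+\cdot)$) collapses the weighted average of $\|f\|_{W^{1,q}}^{2}/\|\nabla f\|_{L^2}^{2}$ into $M^{-1}\|f\|_{L^2(s,t;W^{1,q})}^{2}$, producing
\[
\int_s^t\|f\|_{L^\infty}^{2}\,d\tau\leq CM\log\!\left(e+\frac{\|f\|_{L^2(s,t;W^{1,q})}^{2}}{M}\right).
\]
The elementary estimate $M\log(e+Y^{2}/M)\leq 2M\log(e+Y)+C$ for all $M>0$ and $Y\geq 0$ (valid because $M|\log M|\leq e^{-1}$ on $(0,1]$ and $\log(e+Y^{2})\leq 2\log(e+Y)$) bounds the right-hand side by $C\|\nabla f\|_{L^2(s,t;L^2)}^{2}\log(e+\|f\|_{L^2(s,t;W^{1,q})})+C$, and taking a square root yields \eqref{B-Inequality}. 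The degenerate case $M=0$ forces $f\equiv 0$ after the extension and is trivial.

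The main technical obstacle is the pointwise step: Jensen produces the clean quantity $\|f\|_{L^2(s,t;W^{1,q})}$ on the right precisely because Brezis--Wainger is invoked in its scale-invariant form with the ratio $\|g\|_{W^{1,q}}/\|\nabla g\|_{L^2}$ inside the logarithm; starting instead from a non-invariant form such as $\|g\|_{L^\infty}^{2}\leq C\|\nabla g\|_{L^2}^{2}\log(e+\|g\|_{W^{1,q}})+C$ leaves the uncontrolled mixed product $\int\|\nabla f\|_{L^2}^{2}\|f\|_{W^{1,q}}\,d\tau$, which cannot be bounded by the two hypotheses alone. Establishing the scale-invariant inequality (so that the low-frequency estimate genuinely involves only $\|\nabla g\|_{L^2}$) and verifying that every Littlewood--Paley, Bernstein, and Morrey constant is truly universal so that the final $C$ depends on none of $s$, $t$, $D$ are the points requiring the most care.
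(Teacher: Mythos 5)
Your proposal is correct in substance, but it is a genuinely different (and far more self-contained) route than the paper takes: the paper's entire proof of Lemma \ref{Brezis-Inequality} consists of the same zero-extension reduction to $D=\mathbb{R}^2$ that you perform, followed by a citation of the reference \cite{HW}, where the space-time logarithmic inequality is proved. You instead reconstruct the whole argument: zero extension (legitimate because $f(\tau,\cdot)\in H_0^1(D)$ guarantees the extension is weakly differentiable with gradient equal to the extended gradient, so both the $\dot H^1$ and $W^{1,q}$ norms are preserved), then a pointwise scale-invariant Brezis--Gallou\"et--Wainger inequality with only $\Vert\nabla g\Vert_{L^2}$ as prefactor and the ratio $\Vert g\Vert_{W^{1,q}}/\Vert\nabla g\Vert_{L^2}$ inside the logarithm, and finally Jensen's inequality for the probability measure $M^{-1}\Vert\nabla f(\tau)\Vert_{L^2}^2\,d\tau$ together with the elementary bound $M\log(e+Y^2/M)\le 2M\log(e+Y)+e^{-1}$. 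This Jensen step and the elementary estimate are correct, and you are right that the scale-invariant form of the pointwise inequality is exactly what makes the time integration collapse cleanly to $\Vert f\Vert_{L^2(s,t;W^{1,q})}$; what your approach buys is a proof whose constants are visibly independent of $s$, $t$ and $D$, rather than an appeal to the literature.

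One intermediate claim in your sketch is not correct as literally stated: the bound $\Vert P_{<N}g\Vert_{L^\infty}\le C\Vert\nabla g\Vert_{L^2}\sqrt{\log(e+N)}$ fails for the very low frequencies, since $\int_{|\xi|\lesssim 1}|\hat g|\,d\xi$ cannot be controlled by $\Vert\nabla g\Vert_{L^2}$ alone (test it on the logarithmic cutoff which equals $1$ on $B_1$ and decays like $\log(R/|x|)/\log R$: its Dirichlet norm is $O((\log R)^{-1/2})$ while the low-frequency part of its $L^\infty$ norm stays of order one). The repair is standard and does not change your conclusion: on dyadic blocks $2^j\le 1$ use Bernstein with the $L^q$ norm, $\Vert\Delta_j g\Vert_{L^\infty}\le C2^{2j/q}\Vert g\Vert_{L^q}$, below the crossover scale $2^{j}\sim(\Vert\nabla g\Vert_{L^2}/\Vert g\Vert_{L^q})^{q/2}$, and apply Cauchy--Schwarz only on the $O\bigl(\log N+\log_+(\Vert g\Vert_{W^{1,q}}/\Vert\nabla g\Vert_{L^2})\bigr)$ remaining blocks; after optimizing $N$ this yields precisely the scale-invariant inequality you invoke, with constant depending only on $q$ (which the lemma permits). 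With that correction your argument is complete.
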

\begin{proof}
The inequality (\ref{B-Inequality}) has been proved in \cite{HW} for $D=\mathbb{R}
^{2}$. If $D$ is a domain in $\mathbb{R}^{2},$ it can be proved by zero extension, so we omit the details here.
\end{proof}

The next lemma gives the uniform regularity estimates for solutions to the Stokes
equations with Dirichlet boundary condition.
\begin{lemma}
\label{lemma lam2} Let $\mcD $ be a  domain of $\mathbb{R}^{3},$
whose boundary  is uniformly of class $C^{3}.$ Assume $\mathbf{u} \in H_{0, \sigma}^1 (\mcD) $ is a weak solution
to the following Stokes equations
\be \label{Stokes}
\left\{
\ba & -\Delta \mathbf{u} + \nabla P = \mathbf{f}, \ \ \ \mbox{in} \ \mcD, \\
& {\rm div}~\mathbf{u}= 0, \ \ \ \ \ \mbox{in}\ \mcD, \\
& \mathbf{u}= 0,\ \ \ \ \mbox{on}\ \partial \mcD.
\ea
\right.
\ee
Then  for $f\in L^q(\mcD)$, $1< q < \infty$, it holds that
\be \label{uniform-Stokes-2}
\| \mathbf{u}\|_{W^{2, q} (\mcD)} \leq C \|\mathbf{f}\|_{L^q(\mcD)} + C \|\mathbf{u}\|_{W^{1, q}(\mcD)},
\ee
where the constant $C$ depends only on $q$ and  the $C^{3}$-regularity of $\partial
\mcD $ (not on the size of $\partial \mcD $ or $\mcD $).
\end{lemma}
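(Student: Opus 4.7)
The plan is to prove this size-independent estimate by localizing at the uniform scale provided by the $C^3$ regularity of $\partial\mcD$. First I would choose $r_0>0$ depending only on the $C^3$ constants of $\partial\mcD$ so that $\overline{\mcD}$ admits a covering by balls $\{B_{r_0}^j\}$ of radius $r_0$ with bounded multiplicity $N_0$, together with a smooth subordinate partition of unity $\{\psi_j\}$ whose derivatives are controlled uniformly in $j$. Any constant that arises below from the partition of unity, a Bogovskii corrector on a ball or half-ball of radius $r_0$, or the Stokes $L^q$-regularity on these model domains will then depend only on $q$ and the $C^3$ regularity of $\partial\mcD$, not on the size of $\mcD$ itself.

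Next, for each interior ball $B_{r_0}^j\subset\mcD$, the classical interior $L^q$ theory for the Stokes system gives
\[
\|\mathbf{u}\|_{W^{2,q}(B_{r_0/2}^j)}\le C\bigl(\|\mathbf{f}\|_{L^q(B_{r_0}^j)}+\|\mathbf{u}\|_{W^{1,q}(B_{r_0}^j)}+\|P-\bar P_j\|_{L^q(B_{r_0}^j)}\bigr),
\]
where $\bar P_j$ is the average of $P$ over $B_{r_0}^j$. For a boundary ball I would flatten $\partial\mcD\cap B_{r_0}^j$ by a $C^3$ diffeomorphism whose $C^3$-norm is controlled by the boundary regularity, reducing to a Stokes problem on a half-ball with zero Dirichlet data on the flat portion. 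Applying the classical half-space $L^q$-estimates of Solonnikov and Galdi and pulling back by the inverse diffeomorphism yield the same local bound with controlled constants. The pressure term is then eliminated by a Bogovskii-type duality argument: choosing $\mathbf{w}_j\in W_0^{1,q'}(B_{r_0}^j)$ with $\mathrm{div}\,\mathbf{w}_j=|P-\bar P_j|^{q-2}(P-\bar P_j)-c_j$ and testing against the Stokes equation produces
\[
\|P-\bar P_j\|_{L^q(B_{r_0}^j)}\le C\bigl(\|\mathbf{f}\|_{L^q(B_{r_0}^j)}+\|\mathbf{u}\|_{W^{1,q}(B_{r_0}^j)}\bigr).
\]

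Finally I would raise the local bounds to the $q$-th power and sum over $j$, using the bounded multiplicity $N_0$ of the cover to arrive at the global estimate \eqref{uniform-Stokes-2}. The main technical obstacle is that the pressure, which is defined only up to a constant, must be locally renormalized on each ball so that all constants remain size-independent; this is handled by the Bogovskii estimate above, whose constants depend only on $r_0$ and the $C^3$ geometry of the model domain. A secondary subtlety is that multiplying by $\psi_j$ destroys divergence-freeness, so before applying the model estimate one subtracts a divergence corrector $\widetilde{\mathbf{w}}_j$ with $\mathrm{div}\,\widetilde{\mathbf{w}}_j=\mathbf{u}\cdot\nabla\psi_j$, introducing only lower-order contributions that can be absorbed into the $\|\mathbf{u}\|_{W^{1,q}(B_{r_0}^j)}$ term.
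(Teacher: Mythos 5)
Your proposal is correct and follows essentially the same route as the paper: localize at a uniform scale determined by the $C^{3}$ regularity of $\partial\mcD$, apply local Stokes estimates in the interior and near the boundary, and sum over a cover of bounded multiplicity so that no constant sees the size of $\mcD$. The only difference is self-containedness: the paper simply quotes the local boundary estimates (Ladyzhenskaya, Galdi--Simader, and Heywood for $q=2$) with the pressure already absorbed, whereas you rebuild them by flattening to a half-ball and handle the pressure renormalization and the cutoff divergence defect explicitly via Bogovskii-type correctors, and your summation of the $q$-th powers of the local bounds is exactly the correct form of the paper's covering argument.
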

\begin{proof}
The proof of \eqref{uniform-Stokes-2} for the particular case $q=2$ can be found in \cite[Lemma 2.2]{Heywood}. With the aid of  ``local" estimate up to the boundary for the Stokes problem (\ref{Stokes}) (\cite{Lady, GS}), the estimate \eqref{uniform-Stokes-2} for the general $q>1$ can be proved in the same spirit of \cite{Heywood}.
 For readers' convenience, we give a sketch of the proof for the case with general $q>1$.

In a neighbourhood of a given point $%
\xi \in \partial \mcD ,$ let the boundary $\partial \mcD $ be
represented by $y_{3}=F(y_{1},y_{2})$ in local Cartesian coordinates $%
(y_{1},y_{2},y_{3})$ chosen so that the positive $y_{3}$-axis coincides with
the inward normal to $\partial \mcD $ at $\xi .$ Suppose that $d$ is a
sufficiently small number determined by $C^{3}$-regularity of $\partial
\mcD $ near $\xi$. Denote $Q=\{x\in \mcD :|y_{1}|,|y_{2}|<d$ and $%
F(y_{1},y_{2})<y_{3}<F(y_{1},y_{2})+2d\}$ and $Q^{\prime }=\{x\in \mcD
:|y_{1}|,|y_{2}|<d/2$ and $F(y_{1},y_{2})<y_{3}<F(y_{1},y_{2})+d\}$.
It follows from \cite{Lady, GS} that 
\begin{equation}
\|D^{2}\mathbf{u}\|_{L^{q}(Q^{\prime })}\leq C_{\partial ,d} \|  \mathbf{%
u}\|_{W^{1, q}(Q)}+C_{\partial ,d}\|\mathbf{f} \|_{L^{q}(Q)}.  \label{esti boundary1}
\end{equation}
Moreover, let $G^{\prime }=\{x\in \mcD :\text{dist}(x,\partial \mcD
)>d/2\}$ be an open bounded subset of $\mcD ,$ with $\bar{G}^{\prime
}\subset \mcD .$ Choose a function $\zeta \in C_{0}^{2}(\mcD )$
satisfying that $\zeta \equiv 1$ in $G^{\prime }$ and $0\leq \zeta \leq 1$
elsewhere in $\mcD $, such that $|\nabla \zeta |$ and $|\Delta \zeta |$
are bounded by some constant $C_{\partial ,d}$ which depends on $d$ and the $%
C^{2}$-regularity of $\partial \mcD .$ Then one has
\be   \label{esti boundary2} \ba
\| D^{2}\mathbf{u} \|_{L^{q}(G^{\prime })} & \leq C_{d} \|\nabla \mathbf{u}
\|_{L^{q}(\mcD )}+ C_{d}\| \mathbf{f} \|_{L^{q}(\mcD )}+ C_d  \|(\Delta
\zeta )\mathbf{u} \|_{L^{q}(\mcD )} + C_d \|\mathbf{u}\|_{W^{1, q}(\mcD)}, \\
& \leq C_{\partial, d} \|\mathbf{u}\|_{W^{1, q}(\mcD)} + C_d \| \mathbf{f} \|_{L^q(\mcD)}.
\ea
\ee

 On the other hand, since the boundary is uniformly of class $C^{3}$, $d$ is sufficiently small, and the mean curvature of the surface near the given point is bounded, then the boundary strip $\mcD -G^{\prime }$ can be covered with a collection of
\textquotedblleft cubes" $Q_{i}^{\prime },$ of the type described in (\ref%
{esti boundary1}) in such a way that no point of $\mcD $ belongs to more
than ten of the associated larger \textquotedblleft cubes" $Q_{i}$. Thus
it follows from (\ref{esti boundary1}) that
\begin{equation}\label{esti sum}
\begin{aligned}
\|D^{2}\mathbf{u} \|_{L^{q}(\mcD \setminus G^{\prime })}\leq
&\sum\limits_{i}\|D^{2}\mathbf{u}\|_{L^{q}(Q_{i}^{\prime })}  \notag \\
\leq &\sum\limits_{i}\left( C_{\partial ,d} \| \mathbf{u}%
\|_{W^{1, q} (Q_{i})} + C_{\partial ,d}\|\mathbf{f}\|_{L^{q}(Q_{i})}\right)  \notag
\\
\leq & 10\left( C_{\partial ,d} \|\mathbf{u}\|_{W^{1, q} (\mcD
)}+C_{\partial ,d} \| \mathbf{f} \|_{L^{q}(\mcD )}\right).
\end{aligned}
\end{equation}
This,  together with (\ref{esti boundary2}), implies the desired estimate%
\begin{equation}\nonumber
\|D^{2}\mathbf{u}\|_{L^{q}(\mcD )}\leq C_{\partial }\left( \|
\mathbf{u}\|_{W^{1, q} (\mcD )}+ \|\mathbf{f} \|_{L^{q}(\mcD )}\right),
\label{esti proj}
\end{equation}
since $d$ is determined by the $C^3$-regularity of $\partial \mcD$.
Hence the proof of the lemma is completed.
\end{proof}

\section{A priori estimate and the proof of the main result\label{main proof}}

This section  devotes to the proof of Theorem \ref{main-result}.
Given
initial data $(\rho _{0},\mathbf{u}_{0})$  satisfying (\ref{regular
condition}) and the compatibility condition \eqref{compatibility}, Theorem \ref{local} asserts that there exists a unique
local strong solution $(\rho ,\mathbf{u})$. According to Lemma \ref{axisymmetric}, the solution is axisymmetric. Define the quantity $\Phi (T)$
as follows%
\begin{eqnarray*}
\Phi (T) = \sup_{0\leq t\leq T}\left( \|\rho - \bar{\rho}\|_{L^{\frac32}(\Omega)} +\| \rho - \bar{\rho} \|_{H^{2}(\Omega )} + \|
\mathbf{u}\|_{H^{2}(\Omega )}^{2}\right) + \| \sqrt{\rho }\mathbf{u}
_{t}\|_{L^{\infty }(0,T;L^2 (\Omega ))}^{2} .
\end{eqnarray*}
Suppose this local strong solution blows up at some $T^{\ast }<\infty ,$  the key issue is to  prove that in fact there exists a constant $\bar{M}<\infty $ depending
only on the initial data and $T^{\ast }$ such that%
\begin{equation}
\sup_{0\leq T <  T^{\ast }}\Phi (T)\leq \bar{M}.  \label{main esti}
\end{equation}%
This, together with  Theorem \ref{local},  implies that the local strong solution can be extended beyond $T^{\ast
},$ and thus gives a contradiction. Therefore, the local strong solution does
not blow up in finite time.

\textbf{Proof Theorem \ref{main-result}:} First, it is easy to see that for the strong solutions,   the system \eqref{NS} is equivalent to
\begin{equation}
\left\{
\begin{array}{l}
\rho \mathbf{u}_{t}+\left( \rho \mathbf{u}\cdot \nabla \right) \mathbf{u}%
-\Delta \mathbf{u}+\nabla P=0,\ \ \ \ \mbox{in}\ \Omega \times \lbrack 0,T),
\\
\rho _{t}+\mathbf{u}\cdot \nabla \rho =0\text{ \ \ \ }\mbox{in}\ \Omega
\times \lbrack 0,T), \\
\mathrm{div}~\mathbf{u}=0,\ \ \ \ \mbox{in}\ \Omega \times \lbrack 0,T).%
\end{array}%
\right.  \label{NSE}
\end{equation}%

The proof is divided into 5 steps.

{\it Step 1.\ $L^{\infty }$
bound for $\rho $.} The second equation in \eqref{NSE} is in fact a transport
equation, due to the divergence free property of $\mathbf{u}$. Hence,  for every $0\leq t <  T^{\ast },$  it holds that%
\begin{equation}\label{rho-estimate-1}
\|\rho(\cdot, t) \|_{L^{\infty }(\Omega)}=\|\rho _{0}\|_{L^{\infty }(\Omega)}
\end{equation}
and
\be \label{rho-estimate-2}
\|\rho(\cdot, t) - \bar{\rho} \|_{L^{\frac32}(\Omega)} = \| \rho_0 - \bar{\rho} \|_{L^{\frac32}(\Omega)}.
\ee

{\it Step 2. Basic energy estimate.}  The energy estimate can be stated as the following proposition.
\begin{pro}
\label{basic-energy}There
exists some constant $M_1$, which depends only on $\|\sqrt{\rho_0} \mathbf{u}_0\|_{L^2(\Omega)}^2$, $\|\rho_0 - \bar{\rho} \|_{L^{\frac32} (\Omega)}$, $\|\rho_0\|_{L^\infty(\Omega)}$, $\bar{\rho}^{-1}$, such that
\be \label{2ndapri}
\sup_{0< T < T^*} \left\{ \Vert \sqrt{\rho }\mathbf{u}\Vert _{L^{\infty }(0,T;L^{2}(\Omega
))}^{2} + \| \mathbf{u}\|_{L^2(0, T; H^1(\Omega))}^2 \right\} \leq M_1.
\ee
\end{pro}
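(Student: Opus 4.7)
The argument is the classical energy method for the inhomogeneous incompressible Navier--Stokes system, supplemented by a density decomposition trick to substitute for the missing Poincar\'e inequality in the exterior domain.

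First, I would take the $L^2(\Omega)$ inner product of the momentum equation in \eqref{NSE} with $\mathbf{u}$ and integrate by parts. The pressure term vanishes because $\mathrm{div}\,\mathbf{u}=0$ and $\mathbf{u}|_{\partial\Omega}=0$, and the viscous term contributes $\int_\Omega|\nabla\mathbf{u}|^2\,dx$ thanks to the no-slip condition. Writing the continuity equation in the conservative form $\rho_t+\mathrm{div}(\rho\mathbf{u})=0$ and combining it with $\rho\mathbf{u}_t\cdot\mathbf{u}+\rho(\mathbf{u}\cdot\nabla)\mathbf{u}\cdot\mathbf{u}$, the convective and time-derivative terms assemble into $\tfrac{1}{2}\frac{d}{dt}\int_\Omega\rho|\mathbf{u}|^2\,dx$, yielding the standard identity
\begin{equation*}
\frac{1}{2}\frac{d}{dt}\int_\Omega\rho|\mathbf{u}|^2\,dx+\int_\Omega|\nabla\mathbf{u}|^2\,dx=0.
\end{equation*}
Integration in time immediately gives the $L^\infty(0,T;L^2)$ bound on $\sqrt{\rho}\,\mathbf{u}$ and the $L^2(0,T;L^2)$ bound on $\nabla\mathbf{u}$ in terms of $\|\sqrt{\rho_0}\,\mathbf{u}_0\|_{L^2(\Omega)}$ alone.

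To upgrade this to a bound on $\|\mathbf{u}\|_{L^2(0,T;H^1(\Omega))}$, I would exploit the fact that $\bar\rho>0$ is a positive constant and $\rho-\bar\rho\in L^{3/2}(\Omega)$ is conserved in this norm by the transport equation (Step 1, \eqref{rho-estimate-2}). Splitting $\bar\rho|\mathbf{u}|^2=\rho|\mathbf{u}|^2-(\rho-\bar\rho)|\mathbf{u}|^2$ and applying H\"older's inequality with exponents $3/2$ and $3$ in space,
\begin{equation*}
\bar\rho\|\mathbf{u}\|_{L^2(\Omega)}^2\leq\|\sqrt{\rho}\,\mathbf{u}\|_{L^2(\Omega)}^2+\|\rho-\bar\rho\|_{L^{3/2}(\Omega)}\|\mathbf{u}\|_{L^6(\Omega)}^2,
\end{equation*}
and then using the three-dimensional Sobolev embedding $H^1_0(\Omega)\hookrightarrow L^6(\Omega)$ to control $\|\mathbf{u}\|_{L^6}^2$ by $C\|\nabla\mathbf{u}\|_{L^2}^2$, one obtains a pointwise-in-time bound on $\|\mathbf{u}\|_{L^2}^2$ by a constant multiple of $\|\sqrt{\rho}\,\mathbf{u}\|_{L^2}^2+\|\nabla\mathbf{u}\|_{L^2}^2$, with constants depending only on $\bar\rho^{-1}$ and $\|\rho_0-\bar\rho\|_{L^{3/2}}$. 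Integrating in $t\in(0,T)$ and using the energy identity already obtained closes the estimate.

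The only conceptually delicate point — and the reason that the hypotheses $\bar\rho>0$ and $\rho_0-\bar\rho\in L^{3/2}(\Omega)$ appear — is that $\Omega$ is unbounded, so Poincar\'e's inequality is unavailable and the dissipation $\int|\nabla\mathbf{u}|^2$ alone cannot control $\|\mathbf{u}\|_{L^2}$. The density-splitting trick converts the conserved $L^{3/2}$-mass of $\rho-\bar\rho$ into exactly the Sobolev pairing $\|\rho-\bar\rho\|_{L^{3/2}}\|\mathbf{u}\|_{L^6}^2$, which the dissipation does control. Once this is in place the rest is a routine energy computation, so I do not anticipate a genuine obstacle; the main care is just in identifying the correct duality pairing and in recording that the $L^{3/2}$ norm of $\rho-\bar\rho$ is preserved by the pure transport of $\rho$.
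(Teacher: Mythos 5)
Your proposal is correct and follows essentially the same route as the paper: the standard energy identity obtained by testing the momentum equation with $\mathbf{u}$, followed by the splitting $\bar\rho|\mathbf{u}|^2=\rho|\mathbf{u}|^2-(\rho-\bar\rho)|\mathbf{u}|^2$ together with H\"older ($L^{3/2}$--$L^3$), the embedding $H^1_0(\Omega)\hookrightarrow L^6(\Omega)$, and the conservation of $\|\rho-\bar\rho\|_{L^{3/2}(\Omega)}$ under the transport equation, which is exactly the paper's estimate \eqref{L2-H1}. The only cosmetic difference is that you integrate the resulting pointwise-in-time bound on $\|\mathbf{u}\|_{L^2(\Omega)}^2$ explicitly, whereas the paper combines \eqref{L2-H1} directly with \eqref{energy-inequality}; both carry the same harmless dependence on the (finite) time horizon for the $\int_0^T\|\mathbf{u}\|_{L^2}^2\,dt$ contribution.
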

\begin{proof} Multiplying the first equation of \eqref{NSE} by $\mathbf{u}$ and integrating by
parts over $\Omega $ yield that  for every $0<T<T^{\ast
} $,
\begin{equation}\nonumber
\frac{1}{2} \frac{d}{dt} \int_{\Omega} \rho |\mathbf{u}|^2 \, dx + \int_{\Omega} |\nabla \mathbf{ u}|^2 \, dx = 0.
\end{equation}
Hence,
\begin{equation}
\Vert \sqrt{\rho }\mathbf{u}\Vert _{L^{\infty }(0,T;L^{2}(\Omega
))}^{2}+ 2 \int_{0}^{T}\Vert \nabla \mathbf{u}\Vert _{L^{2}(\Omega
)}^{2}\,dt\leq \int_{\Omega} \rho_0 |\mathbf{u}_0|^2 \, dx .  \label{energy-inequality}
\end{equation}
Moreover, note that
\be \label{L2-H1} \ba
\bar{\rho} \int_{\Omega}|\mathbf{u}|^2 \, dx&  = \int_{\Omega} \rho |\mathbf{u}|^2 \, dx - \int_{\Omega} ( \rho - \bar{\rho} ) |\mathbf{u}|^2 \, dx \\
& \leq \int_{\Omega} \rho |\mathbf{u}|^2 \, dx + \|\rho - \bar{\rho}\|_{L^{\frac32} (\Omega)} \| \mathbf{u}\|_{L^6(\Omega)}^2 \\
& \leq \int_{\Omega} \rho_0 |\mathbf{u}_0|^2 \, dx + C \|\rho_0 - \bar{\rho} \|_{L^{\frac32}(\Omega)} \|\nabla \mathbf{u} \|_{L^2(\Omega)}^2 .
\ea \ee
This,  together with \eqref{energy-inequality}, gives \eqref{2ndapri} with a constant $M_1$ depending only on $\|\sqrt{\rho_0} \mathbf{u}_0\|_{L^2(\Omega)}^2$, $\|\rho_0 - \bar{\rho} \|_{L^{\frac32} (\Omega)}$, and  $\bar{\rho}^{-1}$.
Thus the proof  of Proposition \ref{basic-energy} is completed.
\end{proof}

{\it Step 3.  Estimates for $\Vert \sqrt{\rho }\mathbf{u}_{t}\Vert
_{L^{2}(0,T;L^{2}(\Omega ))}$ and $\Vert \nabla \mathbf{u}\Vert _{L^{\infty
}(0,T;L^{2}(\Omega ))}$.} This is the key step of the whole proof. Higher order estimates of the
density and the velocity can be done in a standard way provided that $\|\mathbf{u}(\cdot, t)%
\|_{H^{1}}$ is uniformly bounded with respect to time. Let
\begin{equation}
\Psi (t)=e+\sup_{0\leq \tau \leq t}\Vert \nabla \mathbf{u}(\cdot ,\tau
)\Vert _{L^{2}(\Omega )}^{2}+\int_{0}^{t}\Vert \sqrt{\rho }\mathbf{u}%
_{t}\Vert _{L^{2}(\Omega )}^{2}\,d\tau ,\ \ \ 0\leq t<T^{\ast }.
\label{H1-quantity}
\end{equation}

To get the $H^{1}$-estimate of $\mathbf{u}$, one can use the  bound of $%
\Vert \mathbf{u}\Vert _{L^{2}(s,T;L^{\infty }(\Omega ))}$.  The key
idea to get the bound of $\Vert
\mathbf{u}\Vert _{L^{2}(s,T;L^{\infty }(\Omega ))}$ is that
an axisymmetric function can be regarded as a
function of two variables in some sense.
\begin{lemma}
\label{Log-inequality} There
exists a constant $C_{2}$ independent of $s$ and  $ T$, such that for every $0\leq s<T<T^{\ast }$,
\begin{equation}
\int_{s}^{T}\Vert \mathbf{u}\Vert _{L^{\infty }(\Omega )}^{2}\,d\tau \leq
C_{2}\Vert \nabla \mathbf{u}\Vert _{L^{2}(s,T; L^2(\Omega ))}^{2}  \ln \Psi
(T)+ C_2.  \label{L2 infty}
\end{equation}
\end{lemma}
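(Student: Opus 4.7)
The plan is to exploit the axisymmetry of $\mathbf{u}$ to reduce the three-dimensional $L^\infty$ bound to the two-dimensional logarithmic Sobolev inequality of Lemma \ref{Brezis-Inequality}, and then to control the $W^{1,q}$ quantity appearing inside the logarithm by a polynomial in $\Psi(T)$ via the Stokes regularity of Lemma \ref{lemma lam2}.

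Write $\mathbf{u}=u^r\mathbf{e}_r+u^\theta\mathbf{e}_\theta+u^z\mathbf{e}_z$, so each scalar component $u^k=u^k(r,z)$ is a function on the half-plane $D=\{(r,z):r>1,\,z\in\mathbb{R}\}\subset\mathbb{R}^2$. Each $u^k$ vanishes on $\partial D=\{r=1\}$ since $\mathbf{u}|_{\partial\Omega}=0$. Because $r\geq 1$ on $D$, the change of variables $dx=2\pi r\,dr\,dz$ gives $\|u^k\|_{L^p(D)}\leq C\|u^k\|_{L^p(\Omega)}$ for every $p\geq 1$, and since for an axisymmetric scalar the two- and three-dimensional gradients coincide pointwise, the analogous bound holds for $\|\nabla u^k\|_{L^p(D)}$. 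Moreover $|\mathbf{u}|$ is itself axisymmetric, so $\|\mathbf{u}\|_{L^\infty(\Omega)}^2\leq\sum_k\|u^k\|_{L^\infty(D)}^2$. Applying Lemma \ref{Brezis-Inequality} to each $u^k$ on $D$ (with $q=6$) and squaring yields
\begin{equation*}
\int_s^T\|\mathbf{u}\|_{L^\infty(\Omega)}^2\,d\tau\leq C\|\nabla\mathbf{u}\|_{L^2(s,T;L^2(\Omega))}^2\ln\bigl(e+\|\mathbf{u}\|_{L^2(s,T;W^{1,6}(\Omega))}\bigr)+C.
\end{equation*}

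It remains to bound $\|\mathbf{u}\|_{L^2(s,T;W^{1,6}(\Omega))}$ by a polynomial in $\Psi(T)$, since any such bound is swallowed by the logarithm. To this end, rewrite the first equation of \eqref{NSE} as the Stokes system $-\Delta\mathbf{u}+\nabla P=-\rho\mathbf{u}_t-\rho\mathbf{u}\cdot\nabla\mathbf{u}$ and apply Lemma \ref{lemma lam2} with $q=2$ together with the uniform density bound \eqref{rho-estimate-1} to get
\begin{equation*}
\|\mathbf{u}\|_{H^2(\Omega)}\leq C\|\sqrt{\rho}\mathbf{u}_t\|_{L^2}+C\|\mathbf{u}\cdot\nabla\mathbf{u}\|_{L^2}+C\|\mathbf{u}\|_{H^1}.
\end{equation*}
I would handle the convective term via the Sobolev/Gagliardo--Nirenberg interpolation
\begin{equation*}
\|\mathbf{u}\cdot\nabla\mathbf{u}\|_{L^2}\leq\|\mathbf{u}\|_{L^6}\|\nabla\mathbf{u}\|_{L^3}\leq C\|\nabla\mathbf{u}\|_{L^2}^{3/2}\|\mathbf{u}\|_{H^2}^{1/2},
\end{equation*}
followed by Young's inequality to absorb the $\|\mathbf{u}\|_{H^2}$ factor, which produces $\|\mathbf{u}\|_{H^2}\leq C\|\sqrt{\rho}\mathbf{u}_t\|_{L^2}+C\|\nabla\mathbf{u}\|_{L^2}^3+C\|\mathbf{u}\|_{H^1}$. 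The $H^1$ norm of $\mathbf{u}$ is controlled by $C\Psi(T)^{1/2}$ through the estimate derived in \eqref{L2-H1}. Squaring, integrating on $(s,T)$, using $H^2(\Omega)\hookrightarrow W^{1,6}(\Omega)$, and invoking the definition of $\Psi$ then gives $\|\mathbf{u}\|_{L^2(s,T;W^{1,6}(\Omega))}^2\leq C(T^*)\Psi(T)^3$, so that $\ln(e+\|\mathbf{u}\|_{L^2(s,T;W^{1,6})})\leq C\ln\Psi(T)+C$, which yields \eqref{L2 infty}. The main technical obstacle is precisely this $H^2$-closure step: the convective term is supercritical for three-dimensional Stokes regularity, and absorbing it cleanly relies on the uniform density bound together with the presence of $\|\sqrt{\rho}\mathbf{u}_t\|_{L^2}^2$ inside $\Psi(T)$.
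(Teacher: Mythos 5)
Your argument is correct, and its skeleton coincides with the paper's: exploit axisymmetry to view $(u^r,u^\theta,u^z)$ as functions on the half-plane $r>1$ (your measure comparison via $r\geq 1$ and the pointwise control of $(\partial_r,\partial_z)(u^r,u^\theta,u^z)$ by $|\nabla\mathbf{u}|$ is exactly the content of \eqref{42_1}--\eqref{45}), apply Lemma \ref{Brezis-Inequality}, and then use the uniform Stokes estimate of Lemma \ref{lemma lam2} to control the $W^{1,6}$ quantity inside the logarithm. The genuine difference is your treatment of the convective term. The paper bounds $\|(\rho\mathbf{u}\cdot\nabla)\mathbf{u}\|_{L^2}\leq C\|\mathbf{u}\|_{L^\infty}\|\nabla\mathbf{u}\|_{L^2}$ in \eqref{ineqfrom2.4}, which reintroduces the unknown $\|\mathbf{u}\|_{L^2(s,T;L^\infty(\Omega))}$ inside the logarithm and forces the self-absorption step \eqref{50}--\eqref{52} (choosing $N_1$ so that $C_1\ln(1+\gamma)\leq\gamma/2$). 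You instead estimate $\|\mathbf{u}\cdot\nabla\mathbf{u}\|_{L^2}\leq C\|\nabla\mathbf{u}\|_{L^2}^{3/2}\|\mathbf{u}\|_{H^2}^{1/2}$ and absorb the $H^2$ factor by Young's inequality, so the quantity inside the logarithm is bounded by a power of $\Psi(T)$ alone and no absorption is needed; this is precisely the closure the paper itself performs later in \eqref{uniform-009}, so it is fully consistent with the rest of the analysis. What each route buys: yours is logically cleaner (no self-referential term in the log), at the harmless price of a constant depending on $T^*$ through $\int_s^T\|\nabla\mathbf{u}\|_{L^2}^6\,d\tau\leq T^*\Psi(T)^3$ (recall $C_2$ need only be independent of $s$ and $T$, and with the energy bound of Proposition \ref{basic-energy} one can even replace this by $M_1\Psi(T)^2$); the paper's route keeps the prefactor of the logarithm exactly $\|\nabla\mathbf{u}\|_{L^2(s,T;L^2(\Omega))}^2$ without invoking the cubic-in-$\|\nabla\mathbf{u}\|_{L^2}$ bound at this stage, which is a matter of presentation rather than necessity.
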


\begin{proof}
Denote $D_{2}=(1,+\infty )\times \mathbb{R}$. Since $\mathbf{u}(x,y,z,t)$ is
axisymmetric, $\mathbf{u}$ can also be considered as a function defined on $%
D_{2}\times \lbrack 0,T^{\ast })$,
\begin{equation}
\mathbf{u}(x,y,z,t)=u^{r}(r,z,t){\boldsymbol{e}}_{r}+u^{\theta }(r,z,t){%
\boldsymbol{e}}_{\theta }+u^{z}(r,z,t){\boldsymbol{e}}_{z}.  \notag
\end{equation}%
Let $\tilde{\nabla}=(\partial _{r},\partial _{z})$ be the two-dimensional
gradient operator, and $\tilde{W}^{1,6}(D_{2})$ be the Sobolev space defined
on $D_{2}$. By Lemma \ref{Brezis-Inequality},%
\begin{equation}\label{41}
\begin{aligned}
&\int_{s}^{T}\Vert \mathbf{u}\Vert _{L^{\infty }(\Omega )}^{2}\,d\tau
 \\
\leq & C\int_{s}^{T}\left( \Vert u^{r}\Vert _{L^{\infty }(D_{2})}+\Vert
u^{\theta }\Vert _{L^{\infty }(D_{2})}+\Vert u^{z}\Vert _{L^{\infty
}(D_{2})}\right) ^{2}\,d\tau   \\
\leq & C\Vert \tilde{\nabla}(u^{r},u^{\theta },u^{z})\Vert
_{L^{2}(s,T;L^{2}(D_{2}))}^{2} \ln \left( e + \Vert (u^{r},u^{\theta
},u^{z})\Vert _{L^{2}(s,T;\tilde{W}^{1,6}(D_{2}))}\right)+C  .
\end{aligned}
\end{equation}

Note that
\begin{eqnarray}
&&\partial _{r}u^{r} =\partial _{1}u^{1}\cos ^{2}\theta +\partial
_{2}u^{1}\sin \theta \cos \theta +\partial _{1}u^{2}\cos \theta \sin \theta
+\partial _{2}u^{2}\sin ^{2}\theta ,  \label{42_1} \\
&&\partial _{r}u^{\theta } =-\partial _{1}u^{1}\cos \theta \sin \theta
-\partial _{2}u^{1}\sin ^{2}\theta +\partial _{1}u^{2}\cos ^{2}\theta
+\partial _{2}u^{2}\sin \theta \cos \theta ,  \label{42_2}
\end{eqnarray}%
and%
\begin{equation}
\partial _{r}u^{z}=\partial _{1}u^{3}\cos \theta +\partial _{2}u^{3}\sin
\theta .  \label{43}
\end{equation}%
Hence one has
\begin{equation}
\Vert \tilde{\nabla}(u^{r},u^{\theta },u^{z})\Vert
_{L^{2}(s,T;L^{2}(D_{2}))}\leq C\Vert \nabla \mathbf{u}\Vert
_{L^{2}(s,T;L^{2}(\Omega ))} \label{44}
\end{equation}%
and
\begin{equation}
\Vert (u^{r},u^{\theta },u^{z})\Vert _{L^{2}(s,T;\tilde{W}%
^{1,6}(D_{2}))}\leq C\Vert \mathbf{u}\Vert _{L^{2}(s,T;W^{1,6}(\Omega ))}.
\label{45}
\end{equation}
Combining \eqref{41} and \eqref{44}-\eqref{45} together gives
\be \label{45-1}
\int_s^T \| \mathbf{u}\|_{L^\infty(\Omega)}^2 \, d\tau
\leq C \| \nabla \mathbf{u} \|_{L^2(s, T; L^2(\Omega))}^2 \ln \left( e  +
\| \mathbf{u} \|_{L^2(s, T; W^{1, 6}(\Omega))}      \right) + C .
\ee

It follows from Sobolev embedding inequality and Lemma \ref{lemma lam2} that one has
\begin{equation} \label{ineqfrom2.4}
\begin{array}{ll}
& \Vert \mathbf{u}\Vert _{L^{2}(s,T;W^{1,6}(\Omega ))}^{2}\leq C\left( \Vert
\nabla \mathbf{u}\Vert _{L^{2}(s,T;L^{2}(\Omega ))}^{2}+\Vert \nabla ^{2}%
\mathbf{u}\Vert _{L^{2}(s,T;L^{2}(\Omega ))}^{2}\right) \\
 \leq & C\left( \Vert \mathbf{u}\Vert _{L^{2}(s,T;H^{1}(\Omega ))}^{2}+\Vert
\sqrt{\rho }\mathbf{u}_{t}\Vert _{L^{2}(s,T;L^{2}(\Omega ))}^{2}+\Vert (\rho
\mathbf{u}\cdot \nabla )\mathbf{u}\Vert _{L^{2}(s,T;L^{2}(\Omega)
)}^{2}\right) \\[0.12in]
 \leq & C\left( \Vert \mathbf{u}\Vert _{L^{2}(s,T;H^{1}(\Omega ))}^{2}+\Vert
\sqrt{\rho }\mathbf{u}_{t}\Vert _{L^{2}(s,T;L^{2}(\Omega ))}^{2}+\Vert
\mathbf{u}\Vert _{L^{2}(s,T;L^{\infty }(\Omega ))}^{2}\Vert \nabla
\mathbf{u}\Vert _{L^{\infty }(s,T;L^{2}(\Omega ))}^{2}\right) \\[0.12in]
 \leq & C\left[ \Vert \mathbf{u}\Vert _{L^{2}(s,T;H^{1}(\Omega ))}^{2}+\Psi
(T)+\Vert \mathbf{u}\Vert _{L^{2}(s,T;L^{\infty }(\Omega ))}^{2} \Psi
(T)\right] .%
\end{array}
\end{equation}
This,  together with the estimate \eqref{45-1}, implies that
\begin{equation}
\begin{array}{ll}
& \Vert \mathbf{u}\Vert _{L^{2}(s,T;L^{\infty }(\Omega ))}^{2} \\[0.12in]
\leq & C \Vert \nabla \mathbf{u}\Vert _{L^{2}(s,T; L^2(\Omega ))}^{2} \ln  \left[ \| \mathbf{u} \|_{L^2(s, T; H^1(\Omega) )}^2
+ \Psi(T) + \|\mathbf{u} \|_{L^2(s, T; L^\infty(\Omega))} \Psi(T)
\right] + C  \\
\leq & C\Vert \nabla \mathbf{u}\Vert _{L^{2}(s,T; L^2(\Omega ))}^{2} \ln \Psi
(T)+ C_1 \ln \left( 1+\Vert \mathbf{u}\Vert _{L^{2}(s,T;L^{\infty }(\Omega
))}^{2}\right)+C  .%
\end{array}
\label{50}
\end{equation}%
Choose $N_{1}$ sufficiently large such that%
\begin{equation}
C_{1}\ln (1+\gamma )\leq \frac{1}{2}\gamma ,\ \ \mbox{for}\ \gamma \geq
N_{1}.  \notag
\end{equation}%
Then one has%
\begin{equation}
C_{1}\ln \left( 1+\Vert \mathbf{u}\Vert _{L^{2}(s,T;L^{\infty }(\Omega
))}^{2}\right) \leq \frac{1}{2}\Vert \mathbf{u}\Vert _{L^{2}(s,T;L^{\infty
}(\Omega ))}^{2} + \frac{1}{2} N_1 .  \label{51}
\end{equation}%
Combining  (\ref{50}) and \eqref{51} yields
\begin{equation}
\int_{s}^{T}\Vert \mathbf{u}\Vert _{L^{\infty }(\Omega )}^{2}\,d\tau \leq
C_{2}\Vert \nabla \mathbf{u}\Vert _{L^{2}(s,T; L^2(\Omega ))}^{2} \ln \Psi
(T)+ C_{2}.  \label{52}
\end{equation}%
This finishes the proof of the proposition.
\end{proof}

With the estimate (\ref{L2 infty}) at hand, one can prove the following estimate.

\begin{pro}
\label{H1estimate} It holds that
\begin{equation}
\sup_{0<T<T^{\ast }}\left\{ \Vert \nabla \mathbf{u}\Vert _{L^{2}(\Omega
)}^{2}+\int_{0}^{T}\Vert \sqrt{\rho }\mathbf{u}_{t}\Vert _{L^{2}(\Omega
)}^{2}\,dt + \int_0^T \|\nabla \mathbf{u} \|_{H^1(\Omega)}^2 \, dt \right\} <+\infty .  \label{esti 1}
\end{equation}
\end{pro}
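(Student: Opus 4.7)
The plan is to derive the desired a priori bound by combining the natural $H^{1}$ energy identity for $\mathbf{u}$ with the logarithmic Sobolev estimate of Lemma \ref{Log-inequality}. I would multiply the momentum equation in \eqref{NSE} by $\mathbf{u}_{t}$ and integrate over $\Omega$. Using the Dirichlet boundary condition to integrate the Laplacian by parts, and the identity $\mathrm{div}\,\mathbf{u}_{t}=0$ (from differentiating the incompressibility constraint) to annihilate the pressure, I obtain
$$
\int_{\Omega}\rho|\mathbf{u}_{t}|^{2}\,dx+\frac12\frac{d}{dt}\int_{\Omega}|\nabla\mathbf{u}|^{2}\,dx=-\int_{\Omega}\rho(\mathbf{u}\cdot\nabla)\mathbf{u}\cdot\mathbf{u}_{t}\,dx.
$$
Using the pointwise bound $\|\rho\|_{L^{\infty}}\le\|\rho_{0}\|_{L^{\infty}}$ from Step 1 together with Cauchy--Schwarz and Young's inequality, the convective term is dominated by $\tfrac12\|\sqrt{\rho}\mathbf{u}_{t}\|_{L^{2}}^{2}+C\|\mathbf{u}\|_{L^{\infty}}^{2}\|\nabla\mathbf{u}\|_{L^{2}}^{2}$, yielding the key differential inequality
$$
\frac{d}{dt}\|\nabla\mathbf{u}\|_{L^{2}}^{2}+\|\sqrt{\rho}\mathbf{u}_{t}\|_{L^{2}}^{2}\le C\|\mathbf{u}\|_{L^{\infty}}^{2}\,\|\nabla\mathbf{u}\|_{L^{2}}^{2}.
$$

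To propagate $\Psi$ across $[0,T^{\ast})$ I would invoke Proposition \ref{basic-energy}, which ensures $\int_{0}^{T^{\ast}}\|\nabla\mathbf{u}\|_{L^{2}}^{2}\,dt\le M_{1}$, so I can choose a finite partition $0=t_{0}<t_{1}<\cdots<t_{N}=T^{\ast}$ with $\delta_{i}:=\|\nabla\mathbf{u}\|_{L^{2}(t_{i},t_{i+1};L^{2})}^{2}$ as small as desired (depending only on $M_{1}$, $C$, and $C_{2}$). Integrating the differential inequality over $[t_{i},t]\subset[t_{i},t_{i+1}]$ and taking the supremum gives a bound of the form $\Psi(t_{i+1})\le C\Psi(t_{i})+C\,\Psi(t_{i+1})\int_{t_{i}}^{t_{i+1}}\|\mathbf{u}\|_{L^{\infty}}^{2}\,d\tau$, and Lemma \ref{Log-inequality} converts the last integral into $C_{2}\delta_{i}\ln\Psi(t_{i+1})+C_{2}$. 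With $\delta_{i}$ chosen so that $CC_{2}\delta_{i}<\tfrac12$, the resulting inequality $\Psi(t_{i+1})\le C\Psi(t_{i})\bigl(1+\Psi(t_{i+1})^{\beta_{i}}\bigr)$ with $\beta_{i}<1$ can be solved explicitly for $\Psi(t_{i+1})$, producing a recursion $\Psi(t_{i+1})\le G(\Psi(t_{i}))$ that, iterated across the $N$ intervals, yields the uniform bound on $\sup_{T<T^{\ast}}\Psi(T)$.

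Once $\Psi$ is controlled, the second-order spatial regularity follows from Lemma \ref{lemma lam2} applied to the Stokes system $-\Delta\mathbf{u}+\nabla P=-\rho\mathbf{u}_{t}-\rho(\mathbf{u}\cdot\nabla)\mathbf{u}$, which gives
$$
\|\mathbf{u}\|_{H^{2}}^{2}\le C\bigl(\|\sqrt{\rho}\mathbf{u}_{t}\|_{L^{2}}^{2}+\|\mathbf{u}\|_{L^{\infty}}^{2}\|\nabla\mathbf{u}\|_{L^{2}}^{2}+\|\mathbf{u}\|_{H^{1}}^{2}\bigr).
$$
Integrating in time and using the already established bounds on $\int_{0}^{T}\|\sqrt{\rho}\mathbf{u}_{t}\|_{L^{2}}^{2}\,d\tau$, $\sup\|\nabla\mathbf{u}\|_{L^{2}}^{2}$, and $\int_{0}^{T}\|\mathbf{u}\|_{L^{\infty}}^{2}\,d\tau$ (the last one finite because $\ln\Psi(T)$ is bounded) delivers $\int_{0}^{T}\|\nabla\mathbf{u}\|_{H^{1}}^{2}\,dt<\infty$.

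The main obstacle is closing the Gronwall step: the logarithmic factor in Lemma \ref{Log-inequality} feeds $\Psi(T)$ back into the bound on $\int\|\mathbf{u}\|_{L^{\infty}}^{2}\,d\tau$, so a single-interval application of Gronwall on $[0,T^{\ast})$ will not close. The remedy is to exploit the time-integrability of $\|\nabla\mathbf{u}\|_{L^{2}}^{2}$ coming from the basic energy estimate to make each $\delta_{i}$ small, which shrinks the exponent $\beta_{i}$ below $1$ and allows the iterative absorption described above; without this subdivision the nonlinearity in the logarithmic term cannot be tamed.
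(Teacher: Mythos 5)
Your overall route is the paper's route: multiply the momentum equation by $\mathbf{u}_t$ to get $\frac{d}{dt}\|\nabla\mathbf{u}\|_{L^2}^2+\|\sqrt{\rho}\mathbf{u}_t\|_{L^2}^2\le C\|\mathbf{u}\|_{L^\infty}^2\|\nabla\mathbf{u}\|_{L^2}^2$, feed in Lemma \ref{Log-inequality}, exploit the smallness of $\|\nabla\mathbf{u}\|_{L^2(s,T;L^2)}^2$ coming from the basic energy estimate to absorb the logarithmic feedback, and finish the $\int_0^T\|\nabla\mathbf{u}\|_{H^1}^2\,dt$ bound via the Stokes estimate \eqref{ineqfrom2.4}. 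However, the central absorption step is carried out incorrectly as written. From the additive inequality $\Psi(t_{i+1})\le C\Psi(t_i)+C\Psi(t_{i+1})\int_{t_i}^{t_{i+1}}\|\mathbf{u}\|_{L^\infty}^2\,d\tau$ together with $\int_{t_i}^{t_{i+1}}\|\mathbf{u}\|_{L^\infty}^2\,d\tau\le C_2\delta_i\ln\Psi(t_{i+1})+C_2$ you obtain $\Psi(t_{i+1})\le C\Psi(t_i)+CC_2\,\Psi(t_{i+1})+CC_2\delta_i\,\Psi(t_{i+1})\ln\Psi(t_{i+1})$, and this does \emph{not} yield $\Psi(t_{i+1})\le C\Psi(t_i)\bigl(1+\Psi(t_{i+1})^{\beta_i}\bigr)$ with $\beta_i<1$. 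Two things go wrong: the term $CC_2\Psi(t_{i+1})$ cannot be absorbed unless $CC_2<1$, which you cannot arrange; and, more fatally, for any fixed $\delta_i>0$ the term $\delta_i\Psi\ln\Psi$ dominates $\Psi$ for large $\Psi$, so the inequality $\Psi\le A+\epsilon\Psi\ln\Psi$ is satisfied by \emph{every} sufficiently large $\Psi$ (whenever $\ln\Psi\ge 1/\epsilon$) and hence gives no bound at all. Smallness of $\delta_i$ does not rescue an additive Gronwall argument against a superlinear right-hand side.

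The fix is exactly what the paper does: apply Gronwall in exponential (multiplicative) form first, $\Psi(T)\le 3\Psi(s)\exp\bigl\{C\int_s^T\|\mathbf{u}\|_{L^\infty}^2\,d\tau\bigr\}$, and only then insert Lemma \ref{Log-inequality}; the exponential converts the logarithm into a power, $\Psi(T)\le C\Psi(s)\,\Psi(T)^{C_3\|\nabla\mathbf{u}\|_{L^2(s,T;L^2)}^2}$, and choosing $s$ so that $C_3\|\nabla\mathbf{u}\|_{L^2(s,T;L^2)}^2\le\frac12$ gives $\Psi(T)\le C\Psi(s)^2$. With this correction your subdivision scheme would work, but it is also unnecessary: since the solution is strong on $[0,s_0]$ for any $s_0<T^\ast$, $\Psi(s_0)$ is finite, so a single interval $(s_0,T^\ast)$ with the tail of $\|\nabla\mathbf{u}\|_{L^2(t,T;L^2)}^2$ small suffices, which is how the paper closes the argument. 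Your final step (Stokes regularity plus the established bounds on $\Psi$ and $\int_0^T\|\mathbf{u}\|_{L^\infty}^2\,dt$) agrees with the paper.
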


\begin{proof}
Multiplying the first equation of \eqref{NSE} by $\partial _{t}\mathbf{u}$
and integrating over $\Omega $ lead to%
\begin{equation}
\frac{1}{2}\frac{d}{dt}\int_{\Omega }|\nabla \mathbf{u}|^{2}\,dx+\int_{%
\Omega }\rho |\mathbf{u}_{t}|^{2}\,dx=-\int_{\Omega }(\rho \mathbf{u}\cdot
\nabla )\mathbf{u}\cdot \mathbf{u}_{t}\,dx.  \label{55}
\end{equation}%
By H\"{o}lder inequality and Young's inequality,%
\begin{equation} \label{56}
\begin{aligned}
\left\vert \int_{\Omega }(\rho \mathbf{u}\cdot \nabla )\mathbf{u}\cdot
\mathbf{u}_{t}\,dx\right\vert \leq &C \|\sqrt{\rho }\mathbf{u}%
_{t}\|_{L^{2}(\Omega )} \Vert \mathbf{u}\Vert _{L^{\infty }(\Omega
)} \Vert \nabla \mathbf{u}\Vert _{L^{2}(\Omega )} \\
\leq &\frac{1}{2}\Vert \sqrt{\rho }\mathbf{u}_{t}\Vert _{L^{2}(\Omega
)}^{2}+C\Vert \mathbf{u}\Vert _{L^{\infty }(\Omega )}^{2}  \Vert \nabla
\mathbf{u}\Vert _{L^{2}(\Omega )}^{2}.
\end{aligned}
\end{equation}%
Substituting \eqref{56} into \eqref{55} gives
\begin{equation}
\frac{d}{dt}\int_{\Omega }|\nabla \mathbf{u}|^{2}\,dx+\int_{\Omega }|\sqrt{%
\rho }\mathbf{u}_{t}|^{2}\,dx\leq C\Vert \mathbf{u}\Vert _{L^{\infty
}(\Omega )}^{2}  \Vert \nabla \mathbf{u}\Vert _{L^{2}(\Omega )}^{2}.
\label{57}
\end{equation}%
Hence, for every $0\leq
s<T<T^{\ast }$,
\begin{equation}
\Vert \nabla \mathbf{u}(T)\Vert _{L^{2}(\Omega )}^{2}+\int_{s}^{T}\Vert
\sqrt{\rho }\mathbf{u}_{t}\Vert _{L^{2}(\Omega )}^{2}\,d\tau \leq \Vert
\nabla \mathbf{u}(s)\Vert _{L^{2}(\Omega )}^{2} \exp \left\{ C
\int_{s}^{T}\Vert \mathbf{u}\Vert _{L^{\infty }(\Omega )}^{2}d\tau \right\} .
\label{58}
\end{equation}%
Consequently,
\begin{equation}
\ba
\displaystyle \Psi (T) & \leq \Psi (s)  \exp \left\{ C \int_{s}^{T}  \Vert
\mathbf{u}\Vert _{L^{\infty }(\Omega )}^{2}\,d\tau \right\}
+\int_{0}^{s}\Vert \sqrt{\rho }\mathbf{u}_{t}\Vert _{L^{2}(\Omega
)}^{2}\,d\tau + \Psi(s) \\
& \leq 3\Psi (s)\exp \left\{ C \int_{s}^{T}\Vert \mathbf{u}\Vert
_{L^{\infty }}^{2}\,d\tau \right\} .%
\ea
\label{59}
\end{equation}%
This, together with  Proposition \ref{Log-inequality}, gives
\begin{equation}
\begin{array}{ll}
\Psi (T) & \leq 3\Psi (s)  \exp \left\{ C_3 \Vert \nabla \mathbf{u}%
\Vert _{L^{2}(s,T;L^{2}(\Omega )}^{2} \ln \Psi (T)+ C_3 \right\} \\%
[3mm]
& \leq C\Psi (s)\Psi (T)^{C_3 \Vert \nabla \mathbf{u}\Vert
_{L^{2}(s,T; L^2(\Omega ))}^{2}}.%
\end{array}
\label{65}
\end{equation}
Recalling the basic energy inequality, one can choose some $s_{0}$ close enough to $T^{\ast }$, such that
\begin{equation}
C_3 \Vert \nabla \mathbf{u}\Vert
_{L^{2}(s_{0},T; L^2(\Omega ))}^{2}\leq \frac{1}{2}.  \label{66}
\end{equation}%
Therefore,  for every $s_{0}<T<T^{\ast }$, one has
\begin{equation}
\Psi (T)\leq C\Psi (s_{0})^{2}<+\infty.   \label{67}
\end{equation}%

Combining the estimates in Proposition \ref{Log-inequality} and Proposition %
\ref{H1estimate} yields
\begin{equation}
\sup_{0<T<T^{\ast }}\int_{0}^{T}\Vert \mathbf{u}\Vert _{L^{\infty }(\Omega
)}^{2}\,dt<+\infty .  \label{75}
\end{equation}%
For every $T \in (0,T^{\ast })$, it follows from the inequality %
\eqref{ineqfrom2.4} that
\begin{equation}
\int_{0}^{T}\Vert \nabla ^{2}\mathbf{u}\Vert _{L^{2}(\Omega )}^{2}\,dt\leq
C\left( \Vert \mathbf{u}\Vert _{L^{2}(0,T;H^{1}(\Omega ))}^{2}+\Psi
(T)+\Vert \mathbf{u}\Vert _{L^{2}(s,T;L^{\infty }(\Omega ))}^{2}  \Psi
(T)\right) <+\infty .  \notag
\end{equation}%
This finishes the proof of the proposition.
\end{proof}

{\it Step 4. Estimates for $\Vert \sqrt{\rho }\mathbf{u}_{t}\Vert
_{L^{\infty }(0,T;L^2(\Omega) )}$ and $\Vert \nabla \mathbf{u}_{t}\Vert _{L^{2
}(0,T;L^{2}(\Omega))}$.} These estimates can be stated as the following proposition.

\begin{pro}
\label{step 4 lemma} Suppose that $(\rho ,\mathbf{u})$ is a local strong solution to the problem \eqref{NS}--\eqref{IBVP}, it
holds that%
\begin{equation}
\sup_{0<T<T^{\ast }}\left\{ \Vert \sqrt{\rho }\mathbf{u}_{t}\Vert
_{L^{2}(\Omega )}^{2} + \|\mathbf{u}\|_{H^2(\Omega)}^2 +\int_{0}^{T}\Vert \nabla \mathbf{u}_{t}\Vert
_{L^{2}(\Omega )}^{2}\,dt\right\} <+\infty .  \label{esti 2}
\end{equation}
\end{pro}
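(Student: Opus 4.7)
The plan is to differentiate the momentum equation in time, test the result against $\mathbf{u}_t$, and close the resulting differential inequality by Gronwall, feeding in the bounds already established in Steps 1--3 (especially the $L^{2}(0, T^*; L^{\infty})$ bound \eqref{75} on $\mathbf{u}$). Formally, applying $\partial_t$ to the first equation of \eqref{NSE} gives
\begin{equation*}
\rho \mathbf{u}_{tt} + \rho_t \mathbf{u}_t + (\rho\mathbf{u})_t \cdot \nabla \mathbf{u} + \rho(\mathbf{u}\cdot\nabla)\mathbf{u}_t - \Delta \mathbf{u}_t + \nabla P_t = 0.
\end{equation*}
Taking the inner product with $\mathbf{u}_t$ and integrating over $\Omega$ (the pressure term vanishes because $\mathrm{div}\,\mathbf{u}_t = 0$ and $\mathbf{u}_t|_{\partial\Omega}=0$), and using the continuity equation in the form $\int \rho_t|\mathbf{u}_t|^2\,dx = 2\int \rho\,\mathbf{u}\cdot (\mathbf{u}_t\cdot\nabla)\mathbf{u}_t\,dx$, one obtains a differential identity of the schematic form
\begin{equation*}
\tfrac{1}{2}\frac{d}{dt}\|\sqrt{\rho}\mathbf{u}_t\|_{L^2}^2 + \|\nabla \mathbf{u}_t\|_{L^2}^2 = \mathrm{RHS},
\end{equation*}
where $\mathrm{RHS}$ consists of cubic/quartic terms in $\mathbf{u}$, $\mathbf{u}_t$ and their first derivatives, each carrying a factor of $\rho$.

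Next, I would bound every term in $\mathrm{RHS}$ by H\"older, Sobolev, Poincar\'e and Young inequalities, treating $\|\mathbf{u}\|_{L^\infty}$ as a quantity that is only square-integrable in time rather than bounded pointwise. A typical estimate takes the form $|\int \rho(\mathbf{u}\cdot\nabla)\mathbf{u}_t\cdot\mathbf{u}_t\,dx| \leq \|\sqrt{\rho}\|_{L^\infty}\|\mathbf{u}\|_{L^\infty}\|\sqrt{\rho}\mathbf{u}_t\|_{L^2}\|\nabla\mathbf{u}_t\|_{L^2}$, to be absorbed via Young's inequality. After putting $\tfrac12\|\nabla \mathbf{u}_t\|_{L^2}^2$ on the left-hand side, one arrives at an inequality of the form
\begin{equation*}
\frac{d}{dt}\|\sqrt{\rho}\mathbf{u}_t\|_{L^2}^2 + \|\nabla \mathbf{u}_t\|_{L^2}^2 \leq C\bigl(1+\|\mathbf{u}\|_{L^\infty}^2 + \|\nabla\mathbf{u}\|_{L^2}^{4}\bigr)\|\sqrt{\rho}\mathbf{u}_t\|_{L^2}^2 + g(t),
\end{equation*}
with $g \in L^1(0,T^*)$ collected from terms already controlled in Steps 2--3. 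Since $\int_0^{T^*}\|\mathbf{u}\|_{L^\infty}^2\,dt<\infty$ by \eqref{75}, and since the compatibility condition \eqref{compatibility} yields $\sqrt{\rho_0}\mathbf{u}_t(\cdot,0)=-\sqrt{\rho_0}(\mathbf{u}_0\cdot\nabla)\mathbf{u}_0-\mathbf{g}\in L^2(\Omega)$, Gronwall's inequality closes the estimate and produces the sought bounds on $\|\sqrt{\rho}\mathbf{u}_t\|_{L^\infty(0,T;L^2)}$ and $\|\nabla\mathbf{u}_t\|_{L^2(0,T;L^2)}$.

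For the $H^2$ bound on $\mathbf{u}$, I would read the momentum equation at each fixed time as the Stokes system $-\Delta\mathbf{u}+\nabla P=-\rho\mathbf{u}_t-(\rho\mathbf{u}\cdot\nabla)\mathbf{u}$, $\mathrm{div}\,\mathbf{u}=0$, $\mathbf{u}|_{\partial\Omega}=0$, and apply Lemma \ref{lemma lam2} with $q=2$. The right-hand side is estimated by $\|\sqrt{\rho}\|_{L^\infty}\|\sqrt{\rho}\mathbf{u}_t\|_{L^2}+\|\rho\|_{L^\infty}\|\mathbf{u}\|_{L^\infty}\|\nabla\mathbf{u}\|_{L^2}$, each factor of which is now pointwise bounded in time thanks to Step 1, Step 3 and the $\|\sqrt{\rho}\mathbf{u}_t\|_{L^\infty(0,T;L^2)}$ bound just obtained; together with the $H^1$ control from Step 3 this yields $\sup_{0<T<T^*}\|\mathbf{u}\|_{H^2}<\infty$. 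The main technical obstacle I anticipate is the careful distribution of the nonlinear terms on $\mathrm{RHS}$: contributions of the form $\int\rho|\mathbf{u}|^{2}|\mathbf{u}_t||\nabla^{2}\mathbf{u}|\,dx$ involve $\|\nabla^2\mathbf{u}\|_{L^2}$, which a priori is only controlled via the Stokes estimate that itself depends on $\|\sqrt{\rho}\mathbf{u}_t\|_{L^2}$; one must therefore substitute Lemma \ref{lemma lam2} inside the energy inequality and arrange the algebra so that the coefficient of $\|\sqrt{\rho}\mathbf{u}_t\|_{L^2}^2$ remains integrable in $t$, keeping $\|\mathbf{u}\|_{L^\infty}^2$ as the only singular-in-time weight.
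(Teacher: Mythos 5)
Your overall scheme is the same as the paper's: differentiate the momentum equation in time, test with $\mathbf{u}_t$, use $\rho_t=-\mathrm{div}(\rho\mathbf{u})$ plus integration by parts on the terms carrying $\rho_t$, absorb $\|\nabla\mathbf{u}_t\|_{L^2}^2$ by Young, close with Gronwall using the Step 2--3 bounds, and then recover the $H^2$ bound from the Stokes estimate of Lemma \ref{lemma lam2}. The Gronwall part of your plan does close: your coefficient $1+\|\mathbf{u}\|_{L^\infty}^2+\|\nabla\mathbf{u}\|_{L^2}^4$ is integrable on $(0,T^*)$ by \eqref{75} and \eqref{esti 1}, and the term $\int_\Omega\rho|\mathbf{u}|^2|\mathbf{u}_t||\nabla^2\mathbf{u}|\,dx$ that you flag as the main obstacle is in fact harmless without any re-substitution of the Stokes estimate: estimating it by $C\|\nabla\mathbf{u}\|_{L^2}^2\|\nabla\mathbf{u}_t\|_{L^2}\|\nabla\mathbf{u}\|_{H^1}$ and using Young puts it into the source $g(t)=C\|\nabla\mathbf{u}\|_{L^2}^4\|\nabla\mathbf{u}\|_{H^1}^2$, which is in $L^1(0,T^*)$ precisely because Proposition \ref{H1estimate} already contains $\int_0^T\|\nabla\mathbf{u}\|_{H^1}^2\,dt<\infty$. (Your variant, substituting Lemma \ref{lemma lam2} and using the uniform bound on $\|\nabla\mathbf{u}\|_{L^2}^4$, also works; it is just unnecessary.)

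The genuine gap is in your $H^2$ step. You bound the Stokes right-hand side by $\|\sqrt{\rho}\|_{L^\infty}\|\sqrt{\rho}\mathbf{u}_t\|_{L^2}+\|\rho\|_{L^\infty}\|\mathbf{u}\|_{L^\infty}\|\nabla\mathbf{u}\|_{L^2}$ and assert that each factor is ``pointwise bounded in time.'' At this stage nothing gives $\sup_{0<t<T^*}\|\mathbf{u}(\cdot,t)\|_{L^\infty(\Omega)}<\infty$: Step 3 provides only $\sup_t\|\mathbf{u}\|_{H^1}$ and $\int_0^{T^*}\|\mathbf{u}\|_{L^\infty}^2\,dt$, and $H^1\not\hookrightarrow L^\infty$ in three dimensions; a pointwise-in-time $L^\infty$ bound would follow only from the $H^2$ bound you are trying to prove, so the argument as written is circular. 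The repair is what the paper does: estimate $\|\rho\,\mathbf{u}\cdot\nabla\mathbf{u}\|_{L^2}\leq C\|\mathbf{u}\|_{L^6}\|\nabla\mathbf{u}\|_{L^3}\leq C\|\nabla\mathbf{u}\|_{L^2}^{3/2}\|\nabla\mathbf{u}\|_{H^1}^{1/2}$ (equivalently, use Agmon-type interpolation $\|\mathbf{u}\|_{L^\infty}\leq C\|\mathbf{u}\|_{H^1}^{1/2}\|\mathbf{u}\|_{H^2}^{1/2}$), and then absorb the resulting $\|\mathbf{u}\|_{H^2}^{1/2}$ factor into the left-hand side of the Stokes estimate by Young's inequality, using only $\sup_t\|\mathbf{u}\|_{H^1}$ and the newly obtained $\sup_t\|\sqrt{\rho}\mathbf{u}_t\|_{L^2}$. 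With that substitution your argument matches the paper's proof. Your treatment of the initial value $\sqrt{\rho}\mathbf{u}_t(\cdot,0)$ via the compatibility condition is the standard one and is fine (modulo the usual remark that the identification holds after projecting out the pressure gradient, i.e.\ one bounds $\limsup_{t\to0^+}\|\sqrt{\rho}\mathbf{u}_t\|_{L^2}$ by $\|\sqrt{\rho_0}(\mathbf{u}_0\cdot\nabla)\mathbf{u}_0\|_{L^2}+\|\mathbf{g}\|_{L^2}$).
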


\begin{proof}
Taking the derivative of the first equation in (\ref{NSE}) with respect to $%
t,$ gives
\begin{equation}\label{time deri}
\rho \mathbf{u}_{tt}+(\rho \mathbf{u}\cdot \nabla )\mathbf{u}_{t}-\Delta
\mathbf{u}_{t}+\nabla P_{t}  \notag =-\rho _{t}\mathbf{u}_{t}-(\rho _{t}\mathbf{u}\cdot \nabla )\mathbf{u}%
-(\rho \mathbf{u}_{t}\cdot \nabla )\mathbf{u}.
\end{equation}%
Multiplying (\ref{time deri}) by $\mathbf{u}_{t}$ and integrating over $%
\Omega$ yield
\begin{equation}\label{3 terms}
\begin{aligned}
&\frac{1}{2}\frac{d}{dt}\int_{\Omega }\rho |\mathbf{u}_{t}|^{2}\,dx+\int_{%
\Omega }|\nabla \mathbf{u}_{t}|^{2}\,dx  \\
=&-\int_{\Omega }\rho _{t}|\mathbf{u}_{t}|^{2}\,dx-\int_{\Omega
}(\rho _{t}\mathbf{u}\cdot \nabla )\mathbf{u}\cdot \mathbf{u}%
_{t}\,dx-\int_{\Omega }(\rho \mathbf{u}_{t}\cdot \nabla )\mathbf{u}\cdot
\mathbf{u}_{t}\text{ }dx.
\end{aligned}
\end{equation}%
One can estimate the three terms on the right-hand side of (\ref{3 terms}) one by one. Taking the second equation of (\ref{NSE}) into account, and using Gagliardo-Nirenberg inequality yield%
\begin{equation}\label{3-1}
\begin{aligned}
-\int_{\Omega }\rho _{t}|\mathbf{u}_{t}|^{2}\,dx = &
\int_{\Omega } {\rm div}~(\rho \mathbf{u} )  |\mathbf{u}_{t}|^{2}\,dx
 \\
=&-2 \int_{\Omega } \rho \mathbf{u} \cdot \nabla \mathbf{u}_{t}\cdot \mathbf{u}%
_{t}\,dx   \\
 \leq & C \| \mathbf{u} \|_{L^6(\Omega)} \|\nabla \mathbf{u}_t\|_{L^2(\Omega)} \|\rho \mathbf{u}_t\|_{L^3(\Omega)} \\
\leq &C \| \nabla \mathbf{u}\|_{L^{2}(\Omega )}\|\nabla \mathbf{u}%
_{t}\|_{L^{2}(\Omega )}^{\frac32} \|\sqrt{\rho }\mathbf{u}_{t}\|_{L^{2}(\Omega
)}^{\frac12}   \\
\leq &\frac{1}{16} \|\nabla \mathbf{u}_{t}\|_{L^{2}(\Omega )}^{2}+C\|\sqrt{%
\rho }\mathbf{u}_{t}\|_{L^{2}(\Omega )}^{2}\|\nabla \mathbf{u}%
\|_{L^{2}(\Omega )}^{4}.
\end{aligned}
\end{equation}%
For the second term of (\ref{3 terms}), one has%
\begin{equation} \label{3-2}
\begin{aligned}
-\int_{\Omega }(\rho _{t}\mathbf{u}\cdot \nabla )\mathbf{u}\cdot \mathbf{u}%
_{t}\,dx =&\int_{\Omega }{\rm div}~( \rho \mathbf{u} )(\mathbf{u}\cdot
\nabla )\mathbf{u}\cdot \mathbf{u}_{t}\,dx  \notag \\
=&-\int_{\Omega }(\rho \mathbf{u} )\cdot \nabla \lbrack (\mathbf{u}\cdot
\nabla )\mathbf{u}\cdot \mathbf{u}_{t}]\,dx  \notag \\
\leq &\int_{\Omega }|\rho \mathbf{u}_{t}||\mathbf{u}||\nabla \mathbf{u}%
|^{2}\,dx+\int_{\Omega }|\rho \mathbf{u}_{t}||\mathbf{u}|^{2}|\nabla ^{2}%
\mathbf{u}|\,dx  \notag \\
&+\int_{\Omega }\rho |\mathbf{u}|^{2}|\nabla \mathbf{u}||\nabla \mathbf{u}%
_{t}|\,dx.
\end{aligned}
\end{equation}%
By Sobolev inequality, one has%
\begin{equation}\label{3-3}
\begin{aligned}
\int_{\Omega }|\rho \mathbf{u}_{t}||\mathbf{u}||\nabla \mathbf{u}|^{2}\,dx
\leq & \|\rho \|_{L^{\infty }(\Omega) } \|\mathbf{u}_{t}\|_{L^{6}(\Omega)}  \|\mathbf{u}\|_{L^{6}(\Omega) }  \|\nabla \mathbf{u}\|_{L^3(\Omega)}^2   \\
\leq & C \|\nabla \mathbf{u}\|_{L^{2}(\Omega)}^{2} \|\nabla \mathbf{u}%
_{t}\|_{L^{2}(\Omega) } \|\nabla \mathbf{u}\|_{H^{1}(\Omega)}   \\
\leq &\frac{1}{16}\|\nabla \mathbf{u}_{t}\|_{L^{2}(\Omega )}^{2}+C\|\nabla
\mathbf{u}\|_{L^{2}(\Omega )}^{4}\|\nabla \mathbf{u}\|_{H^{1}(\Omega )}^{2}.
\end{aligned}
\end{equation}%
Using H\"{o}lder and Young's inequalities gives%
\begin{eqnarray}
\int_{\Omega }|\rho \mathbf{u}_{t}||\mathbf{u}|^{2}|\nabla ^{2}\mathbf{u}%
|\,dx &\leq &C \|\rho \|_{L^{\infty }(\Omega )} \|\mathbf{u}%
_{t}\|_{L^{6}(\Omega )} \|\mathbf{u}\|_{L^{6}(\Omega )}^{2} \|\nabla ^{2}%
\mathbf{u}\|_{L^{2}(\Omega )}  \notag \\
&\leq &C \|\nabla \mathbf{u}_{t}\|_{L^{2}(\Omega )}\|\nabla \mathbf{u}%
\|_{L^{2}(\Omega )}^{2} \|\nabla \mathbf{u}\|_{H^{1}(\Omega )}  \notag \\
&\leq &\frac{1}{16}\|\nabla \mathbf{u}_{t}\|_{L^{2}(\Omega )}^{2}+C\|\nabla
\mathbf{u}\|_{L^{2}(\Omega )}^{4}\|\nabla \mathbf{u}\|_{H^{1}(\Omega )}^{2}.
\label{3-4}
\end{eqnarray}%
Similarly, one has%
\begin{equation}
\int_{\Omega }\rho |\mathbf{u}|^{2}|\nabla \mathbf{u}| |\nabla \mathbf{u}%
_{t}|\,dx\leq \frac{1}{16} \|\nabla \mathbf{u}_{t}\|_{L^{2}(\Omega
)}^{2}+C \|\nabla \mathbf{u}\|_{L^{2}(\Omega )}^{4} \|\nabla \mathbf{u}%
\|_{H^{1}(\Omega )}^{2}.  \label{3-5}
\end{equation}%
For the third term on the right-hand side of (\ref{3 terms}), similar to the
estimate in (\ref{3-1}), one has%
\begin{equation}
-\int_{\Omega }(\rho \mathbf{u}_{t}\cdot \nabla )\mathbf{u}\cdot \mathbf{u}%
_{t}\, dx \leq \frac{1}{16} \|\nabla \mathbf{u}_{t}\|_{L^{2}(\Omega
)}^{2}+C\|\sqrt{\rho }\mathbf{u}_{t}\|_{L^{2}(\Omega )}^{2}\|\nabla \mathbf{u%
}\|_{L^{2}(\Omega )}^{4}.  \label{3-6}
\end{equation}%
Therefore, collecting all the estimates (\ref{3 terms})-(\ref{3-6}) and
taking (\ref{esti 1}) into account yield
\begin{eqnarray*}
&& \frac{1}{2}\frac{d}{dt}\int_{\Omega }|\sqrt{\rho }\mathbf{u}_{t}|^{2}\,dx+%
\frac{1}{4}\int_{\Omega }|\nabla \mathbf{u}_{t}|^{2}\,dx \\
&\leq &C \|\nabla \mathbf{u}\|_{L^{2}(\Omega )}^{4} \|\sqrt{%
\rho }\mathbf{u}_{t}\|_{L^{2}(\Omega )}^{2}+C\|\nabla \mathbf{u}%
\|_{H^{1}(\Omega )}^{2} \|\nabla \mathbf{u}\|_{L^{2}(\Omega )}^{4}.
\end{eqnarray*}%
This, together with Gronwall's inequality, shows
\be \label{3-6-1}
\sup_{0\leq T < T^*} \left[  \int_{\Omega} |\sqrt{\rho}\mathbf{u}_t|^2 \, dx + \int_0^T \int_{\Omega} |\nabla \mathbf{u}_t|^2 \, dx dt           \right]
< + \infty.
\ee
Furthermore, the second equation of (\ref{NSE}), together with
Lemma \ref{lemma lam2}, gives
\begin{eqnarray*}
\| \mathbf{u} \|_{H^{2}(\Omega )} &\leq &C \|\mathbf{u}||_{H^{1}(\Omega
)}+C \|\rho \mathbf{u}_{t} \|_{L^{2}(\Omega )}+C \|\rho \mathbf{u}\cdot \nabla
\mathbf{u}\|_{L^{2}(\Omega )} \\
&\leq &C \|\mathbf{u} \|_{H^{1}(\Omega )} + C \|\sqrt{\rho }\mathbf{u}%
_{t}\|_{L^{2}(\Omega )} + C\|\rho \|_{L^{\infty }(\Omega )}\|\mathbf{u}\cdot
\nabla \mathbf{u} \|_{L^{2}(\Omega )} \\
&\leq &C \|\mathbf{u}\|_{H^{1}(\Omega )}+C \|\sqrt{\rho }\mathbf{u}%
_{t}\|_{L^{2}(\Omega )} + C \|\mathbf{u} \|_{L^{6}(\Omega )} \|\nabla \mathbf{u}%
\|_{L^{3}(\Omega )} \\
&\leq &C \|\mathbf{u} \|_{H^{1}(\Omega )}+C\|\sqrt{\rho }\mathbf{u}%
_{t}\|_{L^{2}(\Omega )}+C\|\nabla \mathbf{u}\|_{L^{2}(\Omega
)}^{\frac32}\|\nabla \mathbf{u}\|_{H^1 (\Omega )}^{\frac12}.
\end{eqnarray*}%
It follows from Young's inequality and  the bounds for  $\|\mathbf{u}%
\|_{L^{\infty }(0,T;H^{1}(\Omega ))}$ and  $\| \sqrt{\rho }\mathbf{u}%
_{t} \|_{L^{\infty }(0,T;L^{2}(\Omega ))}$ that
\begin{equation*}
\sup_{0\leq T < T^*} \| \mathbf{u}(\cdot, T)\|_{H^2(\Omega)} < + \infty.
\end{equation*}
Hence, the proof of Proposition \ref{step 4 lemma} is completed.
\end{proof}

{\it Step 5. Estimates for $\Vert \nabla \rho \Vert _{L^{\infty
}(0,T;H^{1}(\Omega ))}$ and $\Vert \rho_t \Vert _{L^{\infty}(0,T;H^1 (\Omega
))}.$} These estimates can be summarized as follows.

\begin{pro}
\label{step 5 lemma} It holds that
\begin{equation}
\sup_{0<T<T^{\ast }} \left( \| \nabla \rho \|_{L^{\infty }(0,T;H^{1}(\Omega
))} + \|\rho_t\|_{L^\infty(0, T; H^1(\Omega))} \right)
<+\infty .  \label{esti 3}
\end{equation}
\end{pro}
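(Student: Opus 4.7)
\textbf{Proof plan for Proposition \ref{step 5 lemma}.} The density equation in \eqref{NSE} is the pure transport $\rho_{t}+\mathbf{u}\cdot\nabla\rho=0$ with $\mathrm{div}\,\mathbf{u}=0$, so the strategy is standard: estimate $\rho-\bar\rho$ in $H^{2}$ by energy methods driven by the velocity regularity, and then read off $\rho_{t}$ from the equation. The whole argument hinges on upgrading the velocity bounds from Proposition \ref{step 4 lemma} to
$$
\int_{0}^{T^{\ast}}\!\|\nabla\mathbf{u}\|_{L^{\infty}(\Omega)}\,dt+\int_{0}^{T^{\ast}}\!\|D^{2}\mathbf{u}\|_{L^{3}(\Omega)}^{2}\,dt<+\infty,
$$
which is the main new ingredient I need to produce at this stage.

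\emph{Stage 1: Higher integrability of $\nabla^{2}\mathbf{u}$.} I would view $\mathbf{u}$ as a solution of the Stokes system \eqref{Stokes} with right hand side $\mathbf{f}=-\rho\mathbf{u}_{t}-(\rho\mathbf{u}\cdot\nabla)\mathbf{u}$, and apply Lemma \ref{lemma lam2} with $q=6$. Since $\rho$ is bounded by Step 1 and $\mathbf{u}_{t}|_{\partial\Omega}=0$, Sobolev embedding gives $\|\rho\mathbf{u}_{t}\|_{L^{6}}\le C\|\nabla\mathbf{u}_{t}\|_{L^{2}}$, while $\|\rho\mathbf{u}\cdot\nabla\mathbf{u}\|_{L^{6}}\le C\|\mathbf{u}\|_{L^{\infty}}\|\nabla\mathbf{u}\|_{L^{6}}\le C\|\mathbf{u}\|_{H^{2}}^{2}$ and $\|\mathbf{u}\|_{W^{1,6}}\le C\|\mathbf{u}\|_{H^{2}}$. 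Combined with the bounds from Propositions \ref{H1estimate} and \ref{step 4 lemma}, this yields $\mathbf{u}\in L^{2}(0,T^{\ast};W^{2,6}(\Omega))$, and hence via the embedding $W^{2,6}\hookrightarrow W^{1,\infty}$ in three dimensions,
$$
\int_{0}^{T^{\ast}}\!\|\nabla\mathbf{u}\|_{L^{\infty}}^{2}\,dt<+\infty,\qquad \int_{0}^{T^{\ast}}\!\|D^{2}\mathbf{u}\|_{L^{6}}^{2}\,dt<+\infty,
$$
the second of which, together with $D^{2}\mathbf{u}\in L^{\infty}(0,T^{\ast};L^{2})$ and interpolation, controls $\int_{0}^{T^{\ast}}\|D^{2}\mathbf{u}\|_{L^{3}}^{2}\,dt$.

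\emph{Stage 2: Transport energy estimates for $\rho-\bar\rho$.} Setting $\phi=\rho-\bar\rho$, I would differentiate $\phi_{t}+\mathbf{u}\cdot\nabla\phi=0$ up to second order, test against $\partial_{i}\phi$ and $\partial_{ij}\phi$, and use $\mathrm{div}\,\mathbf{u}=0$ to kill the convective term in the top energy. The resulting commutator identities are
$$
\frac{1}{2}\frac{d}{dt}\|\nabla\phi\|_{L^{2}}^{2}\le \|\nabla\mathbf{u}\|_{L^{\infty}}\|\nabla\phi\|_{L^{2}}^{2},
$$
$$
\frac{1}{2}\frac{d}{dt}\|D^{2}\phi\|_{L^{2}}^{2}\le C\|\nabla\mathbf{u}\|_{L^{\infty}}\|D^{2}\phi\|_{L^{2}}^{2}+C\|D^{2}\mathbf{u}\|_{L^{3}}\|\nabla\phi\|_{L^{6}}\|D^{2}\phi\|_{L^{2}}.
$$
The last factor is handled by $\|\nabla\phi\|_{L^{6}}\le C(\|\nabla\phi\|_{L^{2}}+\|D^{2}\phi\|_{L^{2}})$ and Young's inequality, reducing the right hand side to
$$
C\bigl(1+\|\nabla\mathbf{u}\|_{L^{\infty}}+\|D^{2}\mathbf{u}\|_{L^{3}}^{2}\bigr)\bigl(\|\phi\|_{H^{2}}^{2}+1\bigr).
$$
Stage 1 shows that the coefficient is in $L^{1}(0,T^{\ast})$, and Gronwall's inequality, together with the $L^{3/2}$ estimate \eqref{rho-estimate-2}, gives $\rho-\bar\rho\in L^{\infty}(0,T^{\ast};L^{3/2}\cap H^{2})$.

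\emph{Stage 3: The bound on $\rho_{t}$.} From $\rho_{t}=-\mathbf{u}\cdot\nabla\rho$ and the embedding $H^{2}\hookrightarrow L^{\infty}$ for $\mathbf{u}$, one has $\|\rho_{t}\|_{L^{2}}\le \|\mathbf{u}\|_{L^{\infty}}\|\nabla\rho\|_{L^{2}}$ and
$$
\|\nabla\rho_{t}\|_{L^{2}}\le \|\nabla\mathbf{u}\|_{L^{3}}\|\nabla\rho\|_{L^{6}}+\|\mathbf{u}\|_{L^{\infty}}\|D^{2}\rho\|_{L^{2}}\le C\|\mathbf{u}\|_{H^{2}}\|\rho-\bar\rho\|_{H^{2}},
$$
so $\rho_{t}\in L^{\infty}(0,T^{\ast};H^{1})$ follows from Proposition \ref{step 4 lemma} and Stage 2. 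The main obstacle is really Stage 1: extracting $W^{2,6}$-regularity for the velocity from the mixed bound $\sqrt{\rho}\mathbf{u}_{t}\in L^{\infty}_{t}L^{2}_{x}$ (which allows vacuum) together with $\nabla\mathbf{u}_{t}\in L^{2}_{t}L^{2}_{x}$; once this is in hand the transport estimates are routine.
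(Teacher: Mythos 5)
Your proposal is correct and follows essentially the same route as the paper: differentiate the transport equation up to second order, run Gronwall, and control the coefficient through the $q=6$ Stokes estimate of Lemma \ref{lemma lam2} with $\|\rho\mathbf{u}_t\|_{L^6}\leq C\|\nabla\mathbf{u}_t\|_{L^2}$ and the bounds of Propositions \ref{H1estimate} and \ref{step 4 lemma}. The only differences are cosmetic (you pass through $W^{2,6}\hookrightarrow W^{1,\infty}$ and split the second-order term as $\|D^2\mathbf{u}\|_{L^3}\|\nabla\phi\|_{L^6}$, whereas the paper keeps $\|\nabla\mathbf{u}\|_{W^{1,6}}$ in the Gronwall exponent and uses $\|\nabla^2\mathbf{u}\|_{L^6}\|\nabla\rho\|_{L^3}$).
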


\begin{proof}
Differentiating the second equation of (\ref{NSE}) with respect to $x_j$ ($j=1$, $2$, $3$) yields
\begin{equation*}
(\rho _{x_{j}})_{t}+\mathbf{u}\cdot \nabla \rho _{x_{j}}=-\mathbf{u}%
_{x_{j}}\cdot \nabla \rho .
\end{equation*}%
Multiplying the resulting equation by $\rho _{x_{j}},$ integrating over $%
\Omega $, and summing up give
\begin{equation*}
\frac{d}{dt}\int_{\Omega }|\nabla \rho |^{2}\,dx\leq C\int_{\Omega }|\nabla
\mathbf{u}||\nabla \rho |^{2}\,dx\leq C \| \nabla \mathbf{u} \|_{L^{\infty }(\Omega )} \|\nabla \rho \|_{L^{2}(\Omega )}^{2}.
\end{equation*}%
A similar argument shows that%
\begin{eqnarray*}
\frac{d}{dt}\int_{\Omega }|\nabla ^{2}\rho |^{2}\,dx &\leq &C\int_{\Omega
}|\nabla \mathbf{u}||\nabla ^{2}\rho |^{2}\,dx+ \int_{\Omega }|\nabla ^{2}%
\mathbf{u}||\nabla \rho ||\nabla ^{2}\rho |\,dx \\
&\leq &C \| \nabla \mathbf{u}\|_{L^{\infty }(\Omega )} \|\nabla
^{2}\rho \|_{L^{2}(\Omega )}^{2}+ \|\nabla ^{2}\mathbf{u} \|_{L^{6}(\Omega
)}\|\nabla \rho \|_{L^{3}(\Omega )} \|\nabla ^{2}\rho \|_{L^{2}(\Omega )}.
\end{eqnarray*}%
It follows from  Sobolev embedding inequality and Gronwall's inequality that 
\begin{equation*}
\Vert \nabla \rho \Vert _{H^{1}(\Omega )}^{2}\leq C\Vert \nabla \rho
_{0}\Vert _{H^{1}(\Omega )}^{2}\exp \left( C\int_{0}^{T}\Vert \nabla \mathbf{%
u}\Vert _{W^{1,6}(\Omega )}  \,dt\right) .
\end{equation*}%
Herein, by Lemma \ref{lemma lam2},%
\begin{eqnarray*}
 \| \nabla \mathbf{u} \|_{W^{1,6}(\Omega )} & \leq & C \|\mathbf{u}%
\|_{W^{1,6} (\Omega )}+C\|\rho \mathbf{u}_{t}\|_{L^{6}(\Omega )}+C \|\rho
\mathbf{u}\cdot \nabla \mathbf{u}\|_{L^{6}(\Omega )} \\
&\leq &C \|\mathbf{u}\|_{H^{2}(\Omega ) } + C \|\nabla \mathbf{u}%
_{t}\|_{L^{2}(\Omega )}+C \|\mathbf{u}\|_{L^{\infty }(\Omega )} \|\nabla
\mathbf{u}\|_{L^{6}(\Omega )} \\
&\leq &C \|\mathbf{u}\|_{H^{2}(\Omega )}+C \|\nabla \mathbf{u}%
_{t}\|_{L^{2}(\Omega )}+C \|\mathbf{u} \|_{L^{\infty }(\Omega)} \|\nabla \mathbf{u}%
\|_{H^1 (\Omega )}.
\end{eqnarray*}%
By Proposition \ref{step 4 lemma}, one has
\begin{equation*}
\sup_{0\leq T < T^* } \|\nabla \rho \|_{H^1(\Omega)} < +\infty .
\end{equation*}%
Moreover, according to the second equation of \eqref{NSE} and Proposition \ref{step 4 lemma}, it holds that
\be \nonumber
\sup_{0\leq T < T^* }\|\rho_t\|_{H^1(\Omega)} \leq \sup_{0\leq T < T^* } \|\mathbf{u} \cdot \nabla \rho \|_{H^1(\Omega)} < + \infty.
\ee
This finishes the proof of Proposition \ref{step 5 lemma}.
\end{proof}

Combining all the estimates in (\ref{esti 1}), (\ref{esti 2}) and (\ref{esti
3}) yields (\ref{main esti}). This, together with Theorem \ref
{local}, shows that the local strong solution $(\rho, \mathbf{u})$ does not blow up at $T^*$. Hence, $(\rho, \mathbf{u})$ is in fact a global solution so that the proof of Theorem \ref{main-result} is completed.


\section{Local well-posedness of the strong solutions in the exterior domains\label{local result}}
The local existence and uniqueness of strong solutions for inhomogeneous Navier-Stokes system in bounded domains has been proved in \cite{Choe} by Galerkin approximation.  In this section, the  local existence of strong solutions in the exterior domain   is established as a limit of local strong solutions in a sequence of bounded domains constructed in \cite{Choe}.   To prove the convergence of approximate solutions, one of the key observations is that the lifespan of each local strong solution depends only on the $C^3$-regularity of the domain, $\|\rho_0\|_{L^\infty(\Omega)}$, $\|\rho_0 - \bar{\rho} \|_{L^{\frac32}(\Omega)}$, $\bar{\rho}^{-1}$, $\|\mathbf{u}_0\|_{H^1(\Omega)}$, but is independent of the size of $\partial \Omega$ and $\Omega$.  This observation relies heavily on the uniform estimates for the Stokes problem (cf. Lemma \ref{lemma lam2}).

In this section, assume that $\tilde{\Omega}$ is a bounded domain of $\mathbb{R}^3$, the boundary of which is uniformly of class $C^3$. And for simplicity of notations, denote
$L^r = L^r(\tilde{\Omega})$, $W^{k, p} = W^{k, p}(\tilde{\Omega})$, $H^k = H^k(\tilde{\Omega})$, and
\be \nonumber
\int f \, dx = \int_{\tilde{\Omega}} f \, dx.
\ee

Given initial data $(\rho_0, \mathbf{u}_0) \in L^\infty \times (H^2 \cap H_{0, \sigma}^1 )$, suppose that $(\rho_0, \mathbf{u}_0)$ satisfy
\be \label{compatibility-new}
\rho_0 \geq 0, \ \ \ \rho_0 - \bar{\rho} \in H^2 \cap L^{\frac32}, \ \ \
- \mu \Delta \mathbf{u}_0 + \nabla P_0 = \rho_0^{\frac12} \mathbf{g},\ \ \mbox{in}\ \tilde{\Omega},
\ee
with some $(P_0, \mathbf{g})$ belonging to $D^{1,2} \times L^2$. The following is the local existence result proved in \cite{Choe}.
\begin{lemma}\label{localexistence}
Assume that the initial data $(\rho_0, \mathbf{u}_0)$ satisfies $\rho_0 \in L^\infty$, $\mathbf{u}_0 \in H_{0, \sigma}^1 \cap H^2$ and \eqref{compatibility-new}. There exists a positive time $T_0$ and a unique strong solution $(\rho, \mathbf{u})$ to the initial boundary value problem \eqref{NS}--\eqref{IBVP} such that
\be \nonumber \begin{array}{c}
\rho- \bar{\rho} \in C ([0, T_0); L^{\frac32} \cap H^2),\ \ \ \mathbf{u} \in C ([0, T_0); H_{0, \sigma}^1 )\cap L^\infty(0, T_0; H^2), \\[3mm]
\rho_t \in L^\infty(0, T_0; H^1),\ \ \ \sqrt{\rho} \mathbf{u}_t \in L^\infty(0, T_0; L^2),\ \ \ \nabla \mathbf{u}_t \in L^2(0, T_0; L^2).
\end{array}
\ee
\end{lemma}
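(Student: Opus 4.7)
The plan is to follow the Galerkin/linearization construction of \cite{Choe}, with extra care taken so that every constant entering the a priori estimates depends only on $\|\rho_0\|_{L^\infty}$, $\|\rho_0-\bar\rho\|_{L^{3/2}}$, $\bar\rho^{-1}$, $\|\mathbf{u}_0\|_{H^2}$, the $L^2$-norm of $\mathbf{g}$ from the compatibility condition \eqref{compatibility-new}, and the $C^3$-regularity of $\partial\tilde\Omega$, never on its diameter or measure. First I would regularize the initial density by setting $\rho_0^\delta=\rho_0+\delta$ with $\delta>0$ small in order to avoid degeneracy at vacuum, and construct approximate solutions via the usual iteration: given $\mathbf{u}^k$, solve the linear transport equation for $\rho^{k+1}$ with velocity $\mathbf{u}^k$, then solve the linearized momentum equation for $\mathbf{u}^{k+1}$ with density $\rho^{k+1}$ and convection velocity $\mathbf{u}^k$, together with the no-slip boundary condition and $\mathrm{div}\,\mathbf{u}^{k+1}=0$.

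Next I would derive a priori bounds for the iterates in the same order as in Section \ref{main proof}. The standard energy identity controls $\sqrt{\rho^{k+1}}\mathbf{u}^{k+1}$ in $L^\infty_tL^2_x$; the $L^2$-bound on $\mathbf{u}^{k+1}$ itself is recovered from $\rho-\bar\rho\in L^{3/2}$ by the device in \eqref{L2-H1}, thereby avoiding any Poincar\'e inequality whose constant would depend on $|\tilde\Omega|$. The compatibility condition \eqref{compatibility-new} supplies the $L^2$-norm of $\sqrt{\rho_0}\mathbf{u}_t|_{t=0}$ through evaluation of the momentum equation at $t=0$. Multiplying the $t$-differentiated momentum equation by $\mathbf{u}^{k+1}_t$, as in the proof of Proposition \ref{step 4 lemma}, then yields a differential inequality for $\|\sqrt{\rho^{k+1}}\mathbf{u}^{k+1}_t\|_{L^2}^2$ with a polynomially growing right-hand side, which a Gronwall-type argument closes on a short time interval. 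The full $H^2$-bound on $\mathbf{u}^{k+1}$ follows by viewing the momentum equation as a Stokes problem with forcing $-\rho^{k+1}\mathbf{u}^{k+1}_t-(\rho^{k+1}\mathbf{u}^k\cdot\nabla)\mathbf{u}^{k+1}$ and applying Lemma \ref{lemma lam2}, whose constant is uniform in the size of $\tilde\Omega$; and $\rho-\bar\rho\in H^2$ is obtained by differentiating the transport equation as in the proof of Proposition \ref{step 5 lemma}.

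Finally I would show that the iteration contracts in a low-order norm (say, $\rho^{k+1}-\rho^k$ in $L^2$ and $\mathbf{u}^{k+1}-\mathbf{u}^k$ in $H^1_0$) on a uniform small time interval $[0,T_0]$, pass to the limit in $k$ using weak and weak-$*$ compactness together with the Aubin-Lions lemma to recover the nonlinearities and produce a solution of the regularized system, and then let $\delta\to0$ using the same uniform bounds. Uniqueness follows from the standard $L^2$-energy estimate for the difference of two solutions, where the $L^\infty_tL^2_x$ bound on $\sqrt\rho\mathbf{u}_t$ absorbs the term arising from differences of densities.

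The main obstacle is keeping the existence time $T_0$ independent of the size of $\tilde\Omega$. This forces exclusive use of dimension-free inequalities: Sobolev embeddings at critical exponents (in particular $H^1\hookrightarrow L^6$ in three dimensions, whose constant is universal), Gagliardo-Nirenberg interpolations, the uniform Stokes estimate of Lemma \ref{lemma lam2}, and the substitute for Poincar\'e's inequality provided by \eqref{L2-H1}. Once every a priori bound is seen to depend only on the listed data, the existence time coming out of the Gronwall argument inherits the same uniformity, which is precisely what Section \ref{local result} needs in order to take the exhaustion limit in the exterior domain.
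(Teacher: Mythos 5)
Your proposal is correct and follows essentially the same route the paper relies on: the paper does not reprove this lemma but cites the Galerkin/linearization construction of \cite{Choe} (regularize the density away from vacuum, derive a priori bounds using the compatibility condition \eqref{compatibility-new}, pass to the limit by compactness, and prove uniqueness by an energy argument), which is exactly the scheme you reconstruct. Your additional insistence on constants independent of the size of $\tilde\Omega$ is not needed for this lemma itself; the paper obtains that uniformity separately in Lemmas \ref{uniform-1}--\ref{uniform-4}, but including it does no harm.
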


Next, one can prove that $T_0$ has a uniform lower bound, which depends only on $\|\rho_0 - \bar{\rho} \|_{L^{\frac32} } $, $\|\rho_0 \|_{L^\infty}$, $\bar{\rho}^{-1}$, $\| \mathbf{u}_0\|_{H^1}$ and the $C^3$-regularity of $\partial \tilde{\Omega}$. Note that the lower bound does not depend on the size of $\partial \tilde{\Omega}$ or $\tilde{\Omega}$.  And also  some uniform estimates for solutions independent of the size of $\partial \tilde{\Omega}$ and $\tilde{\Omega}$, can be established.

\begin{lemma}\label{uniform-1}
For every $0 \leq t < T_0$, it holds that
\be \label{uniform-001}
\| \rho(\cdot, t) - \bar{\rho} \|_{L^{\frac32}} = \|\rho_0 - \bar{\rho} \|_{L^{\frac32}},\ \ \ \mbox{and}\ \ \ \|\rho(\cdot, t) \|_{L^\infty} = \|\rho_0 \|_{L^\infty }.
\ee
\end{lemma}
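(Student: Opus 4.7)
The plan is to exploit the fact that both identities are a standard consequence of the transport equation for $\rho$ combined with the divergence-free and no-slip conditions on $\mathbf{u}$. From Lemma \ref{localexistence}, we have $\rho - \bar{\rho} \in C([0, T_0); H^2)$ (hence $\rho$ is continuous in $(x,t)$ and bounded on $\tilde{\Omega}$ which embeds $H^2 \hookrightarrow L^\infty$ in dimension three), and $\mathbf{u}\in C([0,T_0); H^1_{0,\sigma})\cap L^\infty(0, T_0; H^2)$ with $\mathbf{u}|_{\partial \tilde{\Omega}} = 0$ and $\operatorname{div}\mathbf{u}=0$. This regularity is more than enough to justify the routine computations below. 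The key observation is that since $\bar{\rho}$ is a constant, the function $\sigma := \rho - \bar{\rho}$ satisfies the same transport equation as $\rho$, namely $\sigma_t + \mathbf{u}\cdot \nabla \sigma = 0$ in $\tilde{\Omega}\times [0,T_0)$.

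For the $L^{3/2}$ identity, I would carry out the standard $L^p$ energy estimate. Formally, multiply the transport equation for $\sigma$ by $|\sigma|^{-1/2}\sigma$ (interpreted as $0$ where $\sigma = 0$) to obtain
\begin{equation*}
\frac{2}{3}\frac{d}{dt}\int |\sigma|^{3/2}\,dx = -\frac{2}{3}\int \mathbf{u}\cdot \nabla |\sigma|^{3/2}\,dx = \frac{2}{3}\int (\operatorname{div}\mathbf{u})\,|\sigma|^{3/2}\,dx - \frac{2}{3}\int_{\partial\tilde{\Omega}} |\sigma|^{3/2}\,\mathbf{u}\cdot\mathbf{n}\,dS = 0,
\end{equation*}
where the last equality uses $\operatorname{div}\mathbf{u}=0$ and the no-slip boundary condition. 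The singular test function can be rigorously justified by replacing $|\sigma|^{3/2}$ with a smooth convex approximation $(\sigma^2+\varepsilon)^{3/4} - \varepsilon^{3/4}$ and letting $\varepsilon\downarrow 0$; each integral passes to the limit by dominated convergence thanks to $\sigma \in C([0,T_0); L^{3/2}\cap L^\infty)$. Integrating in $t$ yields the first identity in \eqref{uniform-001}.

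For the $L^\infty$ identity, the cleanest route is via the Lagrangian flow. Since $\mathbf{u}\in L^\infty(0,T_0; H^2)$ in a bounded three-dimensional domain, standard Sobolev embedding gives $\mathbf{u}\in L^\infty(0,T_0; W^{1,q})$ for any $q<\infty$, so by the DiPerna--Lions theory (or, more concretely, by approximation of $\mathbf{u}$ by smooth divergence-free fields vanishing on $\partial\tilde{\Omega}$), the ODE $\tfrac{d}{dt}X(t,x)=\mathbf{u}(X(t,x),t)$, $X(0,x)=x$, defines a unique measure-preserving flow on $\tilde{\Omega}$. The transport equation then gives $\rho(X(t,x),t)=\rho_0(x)$ a.e., so $\|\rho(\cdot,t)\|_{L^\infty}=\|\rho_0\|_{L^\infty}$ and in particular $\rho(\cdot,t)\ge 0$. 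An equivalent route, avoiding any reference to a flow, is to repeat the energy computation above with $|\rho|^{p}$ in place of $|\sigma|^{3/2}$ to obtain $\|\rho(\cdot,t)\|_{L^p}=\|\rho_0\|_{L^p}$ for every $p\in[1,\infty)$; then let $p\to\infty$ using that $\tilde{\Omega}$ has finite measure and both $\rho_0$ and $\rho(\cdot,t)$ lie in $L^\infty$.

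There is no real obstacle here beyond bookkeeping: the only mildly delicate point is the mollification that justifies the use of $|\sigma|^{3/2}$ (and, for the $L^p$ variant, $|\rho|^p$) as a composition to which one can apply the transport equation, but the $H^2$-in-space regularity of $\rho$ makes this routine. Once both identities are in hand, they provide the time-independent $L^{3/2}$ and $L^\infty$ control of the density that is needed to feed into the subsequent uniform estimates, exactly as in the a priori analysis of Section \ref{main proof}.
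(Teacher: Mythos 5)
Your proof is correct and follows essentially the same route as the paper: the paper simply observes (in Step 1 of Section \ref{main proof}, to which the proof of Lemma \ref{uniform-1} is referred) that the continuity equation reduces to a transport equation by incompressibility, so all $L^p$ norms of $\rho$ and of $\rho-\bar{\rho}$ are conserved. Your write-up just supplies the standard justifying details (regularized test functions, the Lagrangian flow) that the paper leaves implicit.
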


\begin{lemma} \label{uniform-2}
There exists some constant $C$, which depends on  $\|\rho_0 - \bar{\rho}\|_{L^{\frac32}}$, $\|\sqrt{\rho_0} \mathbf{u}_0 \|_{L^2}$, $\bar{\rho}^{-1}$, such that for every $0< T< T_0$,
\be \label{uniform-002}
\|\sqrt{\rho } \mathbf{u}\|_{L^\infty(0, T; L^2)} + \|\mathbf{u}\|_{L^2(0, T; H^1)} \leq C .
\ee
\end{lemma}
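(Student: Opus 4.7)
\medskip

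\textbf{Proof plan for Lemma \ref{uniform-2}.} The statement is the exact analogue of Proposition \ref{basic-energy}, now for the local strong solution on the bounded domain $\tilde{\Omega}$, and with a constant that must not see the size of $\tilde{\Omega}$. The plan is therefore to repeat the argument of Proposition \ref{basic-energy} verbatim and then verify that every constant appearing in the estimate is either universal or depends only on the quantities $\|\rho_0-\bar\rho\|_{L^{3/2}}$, $\|\sqrt{\rho_0}\mathbf{u}_0\|_{L^2}$, and $\bar\rho^{-1}$ listed in the statement.

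First I would test the momentum equation in \eqref{NS} against $\mathbf{u}$ on $\tilde\Omega$. Using $\operatorname{div}\mathbf{u}=0$, the no-slip boundary condition $\mathbf{u}|_{\partial\tilde\Omega}=0$, and the transport equation $\rho_t+\operatorname{div}(\rho\mathbf{u})=0$, the convective terms combine into a total time derivative in the classical way, giving the identity
\[
\frac{1}{2}\frac{d}{dt}\int \rho|\mathbf{u}|^2\,dx+\int |\nabla\mathbf{u}|^2\,dx=0.
\]
Integration in $t\in(0,T)$ then yields
\[
\|\sqrt{\rho}\mathbf{u}\|_{L^\infty(0,T;L^2)}^2+2\int_0^T\|\nabla\mathbf{u}\|_{L^2}^2\,dt\le \|\sqrt{\rho_0}\mathbf{u}_0\|_{L^2}^2,
\]
which controls the first term on the left-hand side of \eqref{uniform-002} and the gradient part of the second.

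To upgrade $\|\nabla\mathbf{u}\|_{L^2}$ to the full $H^1$ norm I would split
\[
\bar\rho\int|\mathbf{u}|^2\,dx=\int \rho|\mathbf{u}|^2\,dx-\int(\rho-\bar\rho)|\mathbf{u}|^2\,dx,
\]
bound the last integral by H\"older with $L^{3/2}\times L^6\times L^6$, and use the Sobolev embedding $\|\mathbf{u}\|_{L^6(\tilde\Omega)}\le C\|\nabla\mathbf{u}\|_{L^2(\tilde\Omega)}$ for $\mathbf{u}\in H_{0,\sigma}^1$. This is exactly the step carried out in \eqref{L2-H1}. Invoking Lemma \ref{uniform-1} to replace $\|\rho-\bar\rho\|_{L^{3/2}}$ by $\|\rho_0-\bar\rho\|_{L^{3/2}}$, then integrating in time and adding to the previous inequality, gives \eqref{uniform-002}.

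The only point that requires care, and which constitutes the main obstacle, is the uniformity of the constant with respect to the domain. The Sobolev constant for $H_{0,\sigma}^1(\tilde\Omega)\hookrightarrow L^6(\tilde\Omega)$ must be taken to be the universal $\mathbb{R}^3$ constant, which is legitimate because one can zero-extend $\mathbf{u}$ across $\partial\tilde\Omega$ and apply the full-space Sobolev inequality; the conservation of $\|\rho-\bar\rho\|_{L^{3/2}}$ asserted in Lemma \ref{uniform-1} holds without reference to $\tilde\Omega$. With these two observations the constant $C$ emerging from the computation above depends only on $\|\rho_0-\bar\rho\|_{L^{3/2}}$, $\|\sqrt{\rho_0}\mathbf{u}_0\|_{L^2}$ and $\bar\rho^{-1}$, as required.
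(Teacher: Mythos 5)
Your proposal is correct and follows essentially the same route as the paper, which proves Lemma \ref{uniform-2} by repeating verbatim the energy argument of Proposition \ref{basic-energy}: the energy identity from testing with $\mathbf{u}$, the decomposition $\bar\rho\int|\mathbf{u}|^2\,dx=\int\rho|\mathbf{u}|^2\,dx-\int(\rho-\bar\rho)|\mathbf{u}|^2\,dx$ with H\"older and the Sobolev embedding into $L^6$, and the conservation of $\|\rho-\bar\rho\|_{L^{3/2}}$. Your explicit remark that the Sobolev constant is domain-independent via zero extension is exactly the point the paper relies on implicitly to get a constant independent of the size of $\tilde\Omega$.
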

The proofs of Lemmas \ref{uniform-1}--\ref{uniform-2} are the same as those for \eqref{rho-estimate-1}--\eqref{2ndapri}.

\begin{lemma}\label{uniform-3}
There exist some positive constants $C$ and $T_0^*$, which depend on $\|\rho_0\|_{L^\infty}$, $\|\rho_0 - \bar{\rho}\|_{L^{\frac32}}$, $\| \mathbf{u}_0 \|_{H^1}$, $\bar{\rho}^{-1}$ and $C^3$-regularity of $\partial \tilde{\Omega}$, such that for every $0< T< T_0^*$,
\be \label{uniform-005}
\sup_{0\leq t < T}\| \mathbf{u} \|_{H^1}^2 + \int_0^T \|\sqrt{\rho} \mathbf{u}_t \|_{L^2}^2 \, dt + \int_0^T \|\mathbf{u} \|_{H^2}^2 \, dt \leq C .
\ee
\end{lemma}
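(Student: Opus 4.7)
\textbf{Plan for the proof of Lemma \ref{uniform-3}.}
The strategy is a standard higher-order energy estimate in $H^1$ for $\mathbf{u}$, coupled with the uniform Stokes bound from Lemma \ref{lemma lam2}, closing the whole thing by a Gronwall-type argument that supplies a \emph{local} (but $\tilde{\Omega}$-independent) time of validity. First, I would test the momentum equation of \eqref{NSE} against $\mathbf{u}_t$ and integrate by parts, which (as in the proof of Proposition \ref{H1estimate}) gives
\[
\tfrac{1}{2}\tfrac{d}{dt}\!\int|\nabla\mathbf{u}|^2\,dx+\!\int\rho|\mathbf{u}_t|^2\,dx=-\!\int\rho(\mathbf{u}\!\cdot\!\nabla)\mathbf{u}\cdot\mathbf{u}_t\,dx.
\]
For the convective term I would use Cauchy--Schwarz together with $\|\sqrt{\rho}\|_{L^\infty}\leq\|\rho_0\|_{L^\infty}^{1/2}$, followed by Sobolev embedding and the interpolation $\|\nabla\mathbf{u}\|_{L^3}\leq C\|\nabla\mathbf{u}\|_{L^2}^{1/2}\|\mathbf{u}\|_{H^2}^{1/2}$, yielding
\[
\Bigl|\!\int\!\rho(\mathbf{u}\!\cdot\!\nabla)\mathbf{u}\cdot\mathbf{u}_t\,dx\Bigr|\leq \tfrac12\|\sqrt{\rho}\mathbf{u}_t\|_{L^2}^2+C\|\nabla\mathbf{u}\|_{L^2}^3\|\mathbf{u}\|_{H^2}.
\]

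Second, to handle $\|\mathbf{u}\|_{H^2}$ I would rewrite the momentum equation as a Stokes system with data $\mathbf{f}=-\rho\mathbf{u}_t-\rho(\mathbf{u}\!\cdot\!\nabla)\mathbf{u}$ and invoke Lemma \ref{lemma lam2} (with $q=2$); the crucial point is that the constant there depends only on the $C^3$-regularity of $\partial\tilde{\Omega}$, not on the size of $\tilde{\Omega}$. This gives
\[
\|\mathbf{u}\|_{H^2}\leq C\|\mathbf{u}\|_{H^1}+C\|\sqrt{\rho}\mathbf{u}_t\|_{L^2}+C\|\nabla\mathbf{u}\|_{L^2}^{3/2}\|\mathbf{u}\|_{H^2}^{1/2},
\]
and absorbing the last term by Young's inequality leaves
\[
\|\mathbf{u}\|_{H^2}\leq C\|\mathbf{u}\|_{H^1}+C\|\sqrt{\rho}\mathbf{u}_t\|_{L^2}+C\|\nabla\mathbf{u}\|_{L^2}^{3}.
\]
Substituting back and using Young's inequality one more time to absorb the $\|\sqrt{\rho}\mathbf{u}_t\|_{L^2}$ factor into the dissipative term produces a closed inequality
\[
\tfrac{d}{dt}\|\nabla\mathbf{u}\|_{L^2}^2+\tfrac14\|\sqrt{\rho}\mathbf{u}_t\|_{L^2}^2\leq C_{\star}\bigl(1+\|\nabla\mathbf{u}\|_{L^2}^2\bigr)^{3},
\]
where $C_{\star}$ depends only on $\|\rho_0\|_{L^\infty}$, $\|\rho_0-\bar{\rho}\|_{L^{3/2}}$, $\bar{\rho}^{-1}$ and the $C^3$-regularity of $\partial\tilde{\Omega}$ (the $\|\mathbf{u}\|_{H^1}^2$ term is controlled by Lemma \ref{uniform-2} together with the inequality \eqref{L2-H1}, uniformly on any bounded time window).

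Third, I would extract the uniform time $T_0^*$ from an elementary ODE comparison: any solution of $y'\leq C_\star(1+y)^3$ with $y(0)=\|\nabla\mathbf{u}_0\|_{L^2}^2$ stays bounded on $[0,T_0^*]$ for some $T_0^*=T_0^*(y(0),C_\star)>0$. Integrating the differential inequality on $[0,T_0^*\wedge T_0)$ yields simultaneously the bound on $\sup_{0\leq t<T}\|\mathbf{u}\|_{H^1}^2$ and on $\int_0^T\|\sqrt{\rho}\mathbf{u}_t\|_{L^2}^2\,dt$. Finally, the integral bound on $\int_0^T\|\mathbf{u}\|_{H^2}^2\,dt$ follows by squaring and integrating the absorbed Stokes estimate, combined with the two bounds just obtained.

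The main obstacle I anticipate is tracking the dependence of every constant and ensuring that \emph{none} of them secretly depend on the size of $\partial\tilde{\Omega}$ or of $\tilde{\Omega}$ itself. In particular the Sobolev embedding $H^1\hookrightarrow L^6$ and the interpolation $\|\nabla\mathbf{u}\|_{L^3}\leq\|\nabla\mathbf{u}\|_{L^2}^{1/2}\|\nabla\mathbf{u}\|_{L^6}^{1/2}$ are scale-invariant, and the uniform Stokes estimate is precisely designed to be scale-free, so these pieces cooperate; the delicate point is the conversion from $\|\sqrt{\rho}\mathbf{u}\|_{L^2}$ to $\|\mathbf{u}\|_{L^2}$, which must be routed through the $L^{3/2}$ smallness of $\rho-\bar{\rho}$ exactly as in \eqref{L2-H1} so that no explicit Poincar\'e constant (which would depend on $\tilde{\Omega}$) sneaks in.
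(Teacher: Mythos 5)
Your a priori estimate is essentially the paper's own argument: testing the momentum equation with $\mathbf{u}_t$, bounding the convective term through Sobolev embedding and interpolation, invoking the uniform Stokes estimate of Lemma \ref{lemma lam2} with $q=2$ and absorbing the $\|\mathbf{u}\|_{H^2}^{1/2}$ factor by Young's inequality, converting $\|\mathbf{u}\|_{L^2}$ to energy quantities through \eqref{L2-H1} so that no Poincar\'e constant enters, and closing with the cubic differential inequality $\frac{d}{dt}\|\nabla\mathbf{u}\|_{L^2}^2+\frac14\|\sqrt{\rho}\mathbf{u}_t\|_{L^2}^2\leq C(1+\|\nabla\mathbf{u}\|_{L^2}^2)^3$, exactly as in \eqref{uniform-010}; the choice of $T_0^*$ by ODE comparison and the recovery of $\int_0^T\|\mathbf{u}\|_{H^2}^2\,dt$ from the absorbed Stokes bound also match the paper.

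The one step you omit is the continuation argument: you only obtain the bounds on $[0,\min\{T_0,T_0^*\})$, where $T_0$ is the existence time furnished by Lemma \ref{localexistence}, whereas the lemma asserts the estimate for every $T<T_0^*$ and is used later (in the proof of Theorem \ref{local}) precisely as a uniform lower bound $T_k\geq T_0^*$ on the lifespans of the approximate solutions. The paper closes this by observing that the uniform bound \eqref{uniform-011}, combined with \eqref{uniform-009}, controls $\int_0^T\|\mathbf{u}\|_{H^2}^2\,dt$ and hence $\int_0^T\|\mathbf{u}\|_{L^\infty}^2\,dt$, so the Serrin-type blow-up criterion \eqref{Serrin} of \cite{Kim} rules out blow-up before $T_0^*$, i.e. $T_0\geq T_0^*$. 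Without this (or an equivalent continuation argument), your statement that the solution "stays bounded on $[0,T_0^*]$" does not yet guarantee that the solution actually exists up to $T_0^*$, which is the whole point of making $T_0^*$ independent of the size of $\tilde{\Omega}$. Add this final step and your proof coincides with the paper's.
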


\begin{proof}
Multiplying the first equation of \eqref{NSE} by $\partial_t \mathbf{u}$ and integrating yield
\be \nonumber
\frac12 \frac{d}{dt} \int  |\nabla \mathbf{u} |^2 \, dx + \int  \rho |\mathbf{u}_t|^2 \, dx =
- \int ( \rho \mathbf{u} \cdot \nabla ) \mathbf{u} \cdot \mathbf{u}_t \, dx .
\ee
By Sobolev embedding inequality and Young's inequality,
\be \label{uniform-007} \ba
\left| \int ( \rho \mathbf{u} \cdot \nabla ) \mathbf{u} \cdot \mathbf{u}_t \, dx \right|
& \leq \|\sqrt{\rho } \|_{L^\infty } \|\sqrt{\rho} \mathbf{u}_t\|_{L^2 } \| \mathbf{u}\|_{L^\infty} \| \nabla \mathbf{u} \|_{L^2} \\
& \leq C \|\sqrt{\rho} \mathbf{u}_t\|_{L^2 } \| \mathbf{u}\|_{H^1}^{\frac32} \|\mathbf{u}\|_{H^2}^{\frac12}\\
& \leq \frac12 \|\sqrt{\rho } \mathbf{u}_t \|_{L^2 }^2 + C \|\mathbf{u}\|_{H^1}^3 \| \mathbf{u}\|_{H^2}.
\ea \ee
Herein, using Lemma \ref{lemma lam2} gives
\be \label{unifomr-008}
\ba
\|\mathbf{u}\|_{H^2} & \leq C \|\rho  \mathbf{u}_t \|_{L^2} + C \| (\rho \mathbf{u} \cdot \nabla ) \mathbf{u} \|_{L^2} + C \|\mathbf{u} \|_{H^1} \\
& \leq C \|\sqrt{\rho} \mathbf{u}_t \|_{L^2} + C \|\rho\|_{L^\infty} \|\mathbf{u}\|_{L^\infty} \|\nabla \mathbf{u}\|_{L^2} + C \|\mathbf{u}\|_{H^1} \\
& \leq C \|\sqrt{\rho } \mathbf{u}_t \|_{L^2} + C \| \mathbf{u}\|_{H^2}^{\frac12} \|\mathbf{u} \|_{H^1}^{\frac32} + C \|\mathbf{u}\|_{H^1}.
\ea
\ee
It follows from Young's inequality and \eqref{L2-H1} that 
\be \label{uniform-009} \ba
\ \ & \|\mathbf{u}\|_{H^2}  \leq C ( \|\sqrt{\rho } \mathbf{u} \|_{L^2} +  \|\mathbf{u}\|_{H^1}^3 + \|\mathbf{u}\|_{H^1}) \\
 \leq & C \left(\|\sqrt{\rho } \mathbf{u}_t \|_{L^2}+  \left[ \bar{\rho}^{-1} \int \rho |\mathbf{u}|^2 \, dx + \bar{\rho}^{-1} \|\rho - \bar{\rho} \|_{L^{\frac32}} \|\nabla \mathbf{u}\|_{L^2}^2 + \|\nabla \mathbf{u} \|_{L^2}^2 \right]^{\frac32} + 1\right) \\
 \leq & C (\|\sqrt{\rho } \mathbf{u}_t \|_{L^2}  + \|\nabla \mathbf{u}\|_{L^2 }^3+1 ).
\ea
\ee
Hence, substituting \eqref{uniform-009} and \eqref{L2-H1} into \eqref{uniform-007}     gives
\be \label{uniform-010}
 \frac{d}{dt} \int  |\nabla \mathbf{u} |^2 \, dx + \int  \rho |\mathbf{u}_t|^2 \, dx
 \leq C_4  (1 + \|\nabla \mathbf{u}\|_{L^2}^2)^3 .
\ee

Let $T_0^* = \frac{1}{8 C_4 \left(1 + \|\nabla \mathbf{u}_0 \|_{L^2}^2 \right)}$. It follows from \eqref{uniform-010} that
\be \label{uniform-011}
\sup_{0\leq t \leq \min\{T_0, T_0^* \}  } \left( \|\nabla \mathbf{u} \|_{L^2}^2 + 1 \right) + \int_0^{\min\{T_0, T_0^*\}}
\int \rho |\mathbf{u}_t|^2 \, dx dt \leq 2 \left( \|\nabla \mathbf{u}_0\|_{L^2}^2 + 1 \right) . \ee
According to the blow up criterion obtained in \cite{Kim}, the estimate \eqref{uniform-011} implies that the local strong solution does not blow up before the time $T_0^*$, i.e.,  $T_0 \geq T_0^*$.

Moreover, combining \eqref{uniform-011}  and  \eqref{uniform-009} together  gives that
\be \label{uniform-012}
\sup_{0<T < T_0^* } \int_0^T \|\mathbf{u}\|_{H^2}^2 \, dt \leq C .
\ee
Thus the proof of the lemma is completed.
\end{proof}

\begin{lemma}\label{uniform-4}
There exists a constant $C$, which depends on the $C^3$-regularity of $\partial \tilde{\Omega}$,  $\|\rho_0\|_{L^\infty}$, $\|\rho_0 - \bar{\rho}\|_{L^{\frac32}}$,
 $\|\rho_0 -\bar{\rho} \|_{H^2}$, $\| \mathbf{u}_0 \|_{H^2}$, $\bar{\rho}^{-1}$, $T_0^*$, such that for every $0< T< T_0^*$,
\be \nonumber
\sup_{0\leq  T \leq T_0^*} \left\{ \|\sqrt{\rho } \mathbf{u}_t \|_{L^2} + \|\mathbf{u}\|_{H^2} + \| \nabla \rho \|_{H^1}
+ \int_0^T \|\nabla \mathbf{u}_t \|_{L^2}^2 \, dt   \right\} \leq C .
\ee
\end{lemma}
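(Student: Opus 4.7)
The plan is to carry out the analogs of Propositions~\ref{step 4 lemma} and~\ref{step 5 lemma} on the bounded domain $\tilde{\Omega}$, making sure that every constant depends only on the quantities listed in the statement and on the $C^3$-regularity of $\partial \tilde{\Omega}$, not on the size of $\tilde{\Omega}$ or $\partial \tilde{\Omega}$. Lemmas~\ref{uniform-1}--\ref{uniform-3} supply the starting uniform bounds on $\|\mathbf{u}\|_{L^\infty(0,T_0^*;H^1)}$, $\int_0^{T_0^*}\|\sqrt{\rho}\mathbf{u}_t\|_{L^2}^2\,dt$ and $\int_0^{T_0^*}\|\mathbf{u}\|_{H^2}^2\,dt$, while the uniform Stokes estimate of Lemma~\ref{lemma lam2} is the key tool keeping the whole argument size-independent.

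To initiate the Gronwall argument, I first control $\|\sqrt{\rho_0}\mathbf{u}_t(0)\|_{L^2}$. Evaluating the momentum equation at $t=0$ and invoking \eqref{compatibility-new} yields
\begin{equation*}
\sqrt{\rho_0}\,\mathbf{u}_t(0) = -\sqrt{\rho_0}(\mathbf{u}_0\cdot\nabla)\mathbf{u}_0 - \mathbf{g},
\end{equation*}
whose $L^2$ norm is controlled by $\|\rho_0\|_{L^\infty}$, $\|\mathbf{u}_0\|_{H^2}$, and $\|\mathbf{g}\|_{L^2}$. Next I would repeat the $\mathbf{u}_t$ energy estimate of Proposition~\ref{step 4 lemma} verbatim: differentiate the momentum equation in time, pair with $\mathbf{u}_t$, use the continuity equation to integrate by parts in the $\rho_t$-terms, and bound the three resulting right-hand-side integrals via H\"older, Gagliardo--Nirenberg, and Lemma~\ref{lemma lam2}. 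The same chain of absorptions produces
\begin{equation*}
\frac{d}{dt}\|\sqrt{\rho}\,\mathbf{u}_t\|_{L^2}^2 + \tfrac12\|\nabla \mathbf{u}_t\|_{L^2}^2 \leq C\|\nabla \mathbf{u}\|_{L^2}^4\|\sqrt{\rho}\,\mathbf{u}_t\|_{L^2}^2 + C\|\mathbf{u}\|_{H^2}^2\|\nabla \mathbf{u}\|_{L^2}^4
\end{equation*}
on $[0,T_0^*]$ with $C$ uniform in $\tilde{\Omega}$. Since the two Gronwall coefficients lie in $L^\infty(0,T_0^*)$ and $L^1(0,T_0^*)$ respectively by Lemma~\ref{uniform-3}, Gronwall's inequality delivers the uniform bounds on $\|\sqrt{\rho}\mathbf{u}_t\|_{L^\infty(0,T_0^*;L^2)}$ and $\int_0^{T_0^*}\|\nabla \mathbf{u}_t\|_{L^2}^2\,dt$. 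Substituting the first bound into \eqref{uniform-009} immediately upgrades to the desired $L^\infty(0,T_0^*;H^2)$ bound on $\mathbf{u}$.

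For the density, I would differentiate the transport equation $\rho_t + \mathbf{u}\cdot\nabla\rho = 0$ in space once and twice, pair with $\nabla \rho$ and $\nabla^2 \rho$, and use $\mathrm{div}\,\mathbf{u} = 0$ to obtain
\begin{equation*}
\frac{d}{dt}\|\nabla \rho\|_{H^1}^2 \leq C\bigl(\|\nabla \mathbf{u}\|_{L^\infty} + \|\nabla^2 \mathbf{u}\|_{L^6}\bigr)\|\nabla \rho\|_{H^1}^2.
\end{equation*}
Sobolev embedding and Lemma~\ref{lemma lam2} at $q=6$ then give
\begin{equation*}
\|\nabla \mathbf{u}\|_{L^\infty} + \|\nabla^2 \mathbf{u}\|_{L^6} \leq C\|\mathbf{u}\|_{W^{2,6}} \leq C\bigl(\|\nabla \mathbf{u}_t\|_{L^2} + \|\mathbf{u}\|_{H^2}^2 + \|\mathbf{u}\|_{H^2}\bigr),
\end{equation*}
which belongs to $L^1(0,T_0^*)$ by the bounds just established, so one further Gronwall application closes the $H^1$ bound for $\nabla \rho$.

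The main obstacle is maintaining uniformity of every constant in $\tilde{\Omega}$. The Sobolev, interpolation, Gagliardo--Nirenberg and Young inequalities appearing above are scale-invariant in three dimensions in the forms used here; the $L^{3/2}$ hypothesis on $\rho_0 - \bar{\rho}$ together with \eqref{L2-H1} absorbs the otherwise size-dependent Poincar\'e factor that would corrupt the $H^2$-type estimates; and the uniform Stokes regularity of Lemma~\ref{lemma lam2} is the one genuinely nontrivial piece of elliptic theory on which the whole scheme hinges. A secondary subtlety is the correct interpretation of $\sqrt{\rho_0}\mathbf{u}_t(0)$ in the presence of vacuum, handled via the compatibility identity displayed above.
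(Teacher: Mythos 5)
Your argument is correct and is essentially the paper's own proof, which simply states that Lemma \ref{uniform-4} follows by repeating Propositions \ref{step 4 lemma}--\ref{step 5 lemma} on $\tilde{\Omega}$ with all constants kept size-independent through the uniform Stokes estimate of Lemma \ref{lemma lam2}. Your explicit control of $\sqrt{\rho_0}\,\mathbf{u}_t(0)$ via the compatibility condition \eqref{compatibility-new} (which brings in a dependence on $\|\mathbf{g}\|_{L^2}$ not listed in the statement) is a detail the paper leaves implicit, and it is exactly what is needed to start the Gronwall argument.
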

The proof for Lemma \ref{uniform-4} follows exactly the same as that for Propositions \ref{step 4 lemma}--\ref{step 5 lemma}.

Now we are in position to prove the  local existence of strong solutions for the inhomogeneous Navier-Stokes system in an exterior domain.

\begin{theorem}
\label{local} Assume that $(\rho_0, \mathbf{u}_0)$ satisfies conditions (\ref{regular condition}) and (\ref{compatibility}),
then there exist a positive time $T_0^* $ and a unique strong solution $(\rho
,\mathbf{u})$ to the initial boundary value problem \eqref{NS}-\eqref{IBVP}
satisfying%
\begin{equation*}
\rho- \bar{\rho}  \in C([0,T_0^*);H^{2}(\Omega )),\text{ \ }\mathbf{u}\in
C([0,T_0^*; H_{0, \sigma}^{1}(\Omega )) \cap L^\infty(0, T_0^*);  H^2 (\Omega )),
\end{equation*}%
\begin{equation*}
\rho_t \in L^\infty(0, T_0^*; H^1(\Omega)), \ \ \sqrt{\rho }\mathbf{u}%
_{t}\in L^{\infty }(0,T_0^* ; L^{2}(\Omega ))\ \ \nabla \mathbf{u}_t \in L^2(0, T_0^*; L^2(\Omega)).
\end{equation*}%
\end{theorem}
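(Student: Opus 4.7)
The plan is to construct the solution on the exterior domain $\Omega$ as a limit of strong solutions on an exhausting sequence of bounded subdomains, exploiting the crucial feature that the estimates in Lemmas \ref{uniform-1}--\ref{uniform-4} depend only on the initial data norms and the $C^3$-regularity of the boundary, not on the size of $\tilde{\Omega}$.

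First I would select a nested sequence of bounded domains $\Omega_n \subset \Omega_{n+1} \subset \Omega$ with $\bigcup_n \Omega_n = \Omega$ and $\partial \Omega_n$ uniformly of class $C^3$; concretely, one may take a smoothed version of $\Omega \cap B_{R_n}(0)$ with $R_n \to \infty$, arranged so that $\partial \Omega_n$ contains $\partial \Omega \cap \{|x| \le R_n - 1\}$ together with a smooth outer cap whose $C^3$ constants are independent of $n$. Next I would construct approximating initial data $(\rho_{0,n}, \mathbf{u}_{0,n})$ on $\Omega_n$ by truncation: take $\rho_{0,n} = \bar{\rho} + \chi_n (\rho_0 - \bar{\rho})$ for a smooth cutoff $\chi_n$ supported in $\Omega_n$ and equal to $1$ on $\Omega_{n-1}$, and let $\mathbf{u}_{0,n}$ be a divergence-free element of $H^2(\Omega_n) \cap H^1_{0,\sigma}(\Omega_n)$ obtained by correcting $\chi_n \mathbf{u}_0$ with a Bogovski-type operator. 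With $(P_{0,n}, \mathbf{g}_n)$ chosen accordingly, the compatibility condition \eqref{compatibility-new} holds on $\Omega_n$ and the norms $\|\rho_{0,n} - \bar{\rho}\|_{L^{3/2} \cap H^2}$, $\|\mathbf{u}_{0,n}\|_{H^2}$, $\|\mathbf{g}_n\|_{L^2}$ are bounded uniformly in $n$, with $(\rho_{0,n}, \mathbf{u}_{0,n}) \to (\rho_0, \mathbf{u}_0)$ in the natural spaces.

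Applying Lemma \ref{localexistence} in each $\Omega_n$ yields a strong solution $(\rho_n, \mathbf{u}_n)$, and Lemma \ref{uniform-3} gives a common lifespan $T_0^*$ independent of $n$. Lemmas \ref{uniform-1}--\ref{uniform-4} then deliver uniform bounds on $\|\rho_n - \bar{\rho}\|_{L^\infty(0, T_0^*; L^{3/2} \cap H^2)}$, $\|\mathbf{u}_n\|_{L^\infty(0, T_0^*; H^2)}$, $\|\sqrt{\rho_n} \partial_t \mathbf{u}_n\|_{L^\infty(0, T_0^*; L^2)}$, $\|\nabla \partial_t \mathbf{u}_n\|_{L^2(0, T_0^*; L^2)}$, and $\|\partial_t \rho_n\|_{L^\infty(0, T_0^*; H^1)}$. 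After extending $(\rho_n - \bar{\rho}, \mathbf{u}_n)$ trivially outside $\Omega_n$, the Aubin--Lions lemma applied on each bounded subregion $\Omega \cap B_k$, combined with a diagonal extraction, produces a subsequence with $\mathbf{u}_n \to \mathbf{u}$ strongly in $L^2(0, T_0^*; H^1_{\mathrm{loc}}(\Omega))$ and $\rho_n \to \rho$ strongly in $C([0, T_0^*]; L^p_{\mathrm{loc}}(\Omega))$ for any finite $p$. This suffices to pass to the limit in the nonlinear terms of \eqref{NSE}; weak-$*$ lower semicontinuity of the norms then shows the limit inherits the uniform bounds, so in particular $\rho - \bar{\rho} \in L^{3/2}(\Omega) \cap H^2(\Omega)$ and $\mathbf{u} \in H^1_0(\Omega) \cap H^2(\Omega)$. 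The no-slip condition on $\partial \Omega$ passes to the limit since $\partial \Omega \subset \partial \Omega_n$ for all sufficiently large $n$, and continuity in time is recovered from the uniform bounds on $\partial_t(\rho_n - \bar{\rho})$ and $\partial_t \mathbf{u}_n$.

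Uniqueness is established by a standard energy estimate on the difference of two strong solutions: writing $\sigma = \rho_1 - \rho_2$ and $\mathbf{w} = \mathbf{u}_1 - \mathbf{u}_2$, one tests the transport equation for $\sigma$ against $\sigma$ and the momentum equation for $\mathbf{w}$ against $\mathbf{w}$, bounds the cross terms using the strong regularity of $(\rho_i, \mathbf{u}_i)$, and applies Gronwall to force $\sigma \equiv 0$ and $\mathbf{w} \equiv 0$. The main obstacle will be the careful construction of the approximate initial data $(\rho_{0,n}, \mathbf{u}_{0,n})$: one must preserve the compatibility condition \eqref{compatibility-new} with a uniformly bounded $(P_{0,n}, \mathbf{g}_n) \in D^{1,2}(\Omega_n) \times L^2(\Omega_n)$, control the $H^2$ norms independently of $n$, and ensure the Bogovski correction has norm controlled only by the $C^3$-regularity of $\partial \Omega_n$. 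The remaining compactness and limit passage are routine once the uniform bounds from Lemmas \ref{uniform-1}--\ref{uniform-4} are in hand.
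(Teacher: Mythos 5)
Your proposal follows essentially the same route as the paper: exhaust $\Omega$ by bounded domains $\Omega\cap\{|x|<k\}$ with uniformly $C^3$ boundaries, approximate the initial data, invoke the bounded-domain existence result of Choe--Kim together with the size-independent estimates of Lemmas \ref{uniform-1}--\ref{uniform-4} to get a common lifespan $T_0^*$, and pass to the limit via Aubin--Lions on compact subdomains, with uniqueness by a standard Gronwall argument. The points you flag as delicate (compatibility-preserving truncation of the data, smoothing the outer cap) are exactly the points the paper also treats only briefly, so the two arguments coincide in substance.
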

\begin{proof}
Given $k\in \mathbb{N}$, let $ \Omega_k : =\Omega\cap \{|x|< k\}$. In each domain $\Omega_k$, choose the initial density and velocity $(\rho_{0, k}, \mathbf{u}_{0, k})$, which satisfy that
\be\nonumber
\rho_{0, k}= \rho_0 + \epsilon_{k}, \ \ \mbox{with}\ \lim_{k\rightarrow \infty} \| \epsilon_k \|_{H^2(\Omega_k)}= 0,\ \ \ \inf_{\Omega_k}\epsilon_k>0,
\ee
\be
\mathbf{u}_{0,k} \in H^2(\Omega_k)\cap H_{0, \sigma}^1(\Omega_k),\ \ \  \mbox{with}\
\| \mathbf{u}_{0,k}\|_{H^2(\Omega_k)} \leq 2 \|\mathbf{u}_0 \|_{H^2(\Omega)},
\ee
and
\be \nonumber
\mathbf{u}_{0,k} \ \mbox{converges to } \mathbf{u}_0\  \mbox{in}\ H^2(\Omega^\prime), \  \ \mbox{ for each compact subdomain} \ \Omega^{\prime}.
\ee
By Lemma \ref{localexistence}, for each $k\in \mathbb{N}$,  there exists a unique strong solution $(\rho_k, \mathbf{u}_k)$  to the equations \eqref{NS} with the initial data $(\rho_{0, q}, \mathbf{u}_{0,k})$ over some time interval $[0, T_k)$. As proved above, there exists a positive time  $T_0^*$, which depends only on the $C^3$-regularity of $\partial \Omega_k$, $\|\rho_{0, k}- \bar{\rho } \|_{L^{\frac32}(\Omega_k)}$, $\bar{\rho}^{-1}$, $ \|\rho_{0,k} \|_{L^\infty(\Omega_k)}$, $\|\mathbf{u}_{0,k}\|_{H^1(\Omega_k)} $, such that
\be \nonumber
T_k \geq T_0^*.
\ee
It means that the lifespans of the approximate solutions $(\rho_k, \mathbf{u}_k)$ have a uniform lower bound $T_0^*$. Moreover, as proved above,
there exists some constant $C$ which does not depend on the size of $\Omega$ or $\partial \Omega$, such that
\be \label{uniform-021}
\sup_{ 0 \leq T \leq T_0^* } \left( \|\rho_k - \bar{\rho} \|_{L^\frac32(\Omega_k)} + \|\rho_k - \bar{\rho}\|_{H^2(\Omega_k)}+ \|\partial_t \rho_k\|_{H^1(\Omega_k)} + \|\mathbf{u}_k \|_{H^2(\Omega_k)} \right)
\leq C,
\ee
and
\be \label{unifrom-22}
\int_0^{T_0^*} \|\nabla \partial_t \mathbf{u}_{k} \|_{L^2(\Omega_k)}^2 \, dt \leq C .
\ee
 Hence, there exists a subsequence of $(\rho_k, \mathbf{u}_k)$(which is still labelled by $(\rho_k, \mathbf{u}_k)$), and the limit function $(\rho, \mathbf{u})$, such that for every compact subdomain $\Omega^{\prime}$,
\be \nonumber
\begin{array}{c}
\rho_k - \bar{\rho } \stackrel{\ast}{\rightharpoonup}\ \rho - \bar{\rho} \ \ \mbox{in}\ L^\infty(0, T_0^*; H^2 (\Omega^{\prime} )), \ \ \ \ \mathbf{u}_k \stackrel{\ast}{\rightharpoonup} \ \mathbf{u}\ \  \mbox{in}\ L^\infty(0, T_0^*; H^2(\Omega^{\prime} )), \\[2mm]
\partial_t \rho_k  \stackrel{\ast}{\rightharpoonup}\  \partial_t \rho \ \ \mbox{in}\ L^\infty(0, T_0^*; H^1(\Omega^{\prime} )), \ \ \ \ \nabla \partial_t \mathbf{u}_k
\rightharpoonup \ \nabla \partial_t \mathbf{u}\ \ \mbox{in}\ L^2(0, T_0^*; L^2(\Omega^{\prime})).
\end{array}
\ee
and
\be \nonumber
\sup_{0\leq T \leq T_0^*} \left(  \|\rho - \bar{\rho}\|_{L^{\frac32}(\Omega)} + \|\rho - \bar{\rho} \|_{H^2(\Omega)} + \|\mathbf{u}\|_{H^2(\Omega)}
+ \|\rho_t\|_{H^1(\Omega)} \right) \leq C,
\ee
\be \nonumber
\int_0^{T_0^*} \|\nabla \mathbf{u}_t \|_{L^2(\Omega)}^2 \, dt \leq C.
\ee
By Aubin-Lions Lemma,
\be \nonumber
\rho_k - \bar{\rho} \rightarrow \ \rho - \bar{\rho} \ \mbox{in}\ C ([0, T_0^*]; H^1(\Omega^{\prime})),\ \ \ \ \mathbf{u}_k \rightarrow \mathbf{u}\ \mbox{in}\ C([0, T_0^*]; H^1(\Omega^{\prime})).
\ee
Hence, $(\rho, \mathbf{u})$ is a strong solution to the initial value problem \eqref{NS}--\eqref{IBVP}. Furthermore, it follows from the equations that $\rho - \bar{\rho} \in C ([0, T_0^*]; H^2(\Omega))$. The proof of uniqueness is now standard and one can refer to \cite{Choe} for details. Hence, the proof of Theorem \ref{local} is completed.
\end{proof}

\medskip

{\bf Acknowledgement.}
The research of Guo was partially supported by China Postdoctoral Science Foundation grants 2017M620149 and 2018T110387.
The research of Wang was partially supported by NSFC grant 11671289. The research of  Xie was partially supported by  NSFC grants 11971307 and 11631008,  and Young Changjiang Scholar of Ministry of Education in China.

\end{document}